
\documentclass{article}
\usepackage{amsfonts}
\usepackage{amsmath}

\setcounter{MaxMatrixCols}{10}

\newtheorem{theorem}{Theorem}

\newtheorem{definition}[theorem]{Definition}

\newtheorem{lemma}[theorem]{Lemma}
\newtheorem{notation}[theorem]{Notation}

\newtheorem{proposition}[theorem]{Proposition}

\newenvironment{proof}[1][Proof]{\noindent\textbf{#1.} }{\ \rule{0.5em}{0.5em}}
\input{tcilatex}
\begin{document}

\title{The Lipschitz continuity of the solution to branched rough
differential equations}
\author{Jing Zou, Danyu Yang\thanks{%
danyuyang@cqu.edu.cn}}
\maketitle

\begin{abstract}
Based on an isomorphism between Grossman Larson Hopf algebra and Tensor Hopf
algebra, we apply a sub-Riemannian geometry technique to branched rough
differential equations and obtain the explicit Lipschitz continuity of the
solution with respect to the initial value, the vector field and the driving
rough path.
\end{abstract}

\section{Introduction}

In his seminal paper \cite{lyons1998differential}, Lyons built the theory of
rough paths. The theory gives a meaning to differential equations driven by
highly oscillating signals and proves the existence, uniqueness and
stability of the solution to differential equations. The theory has an
embedded component in stochastic analysis, and has been successfully applied
to differential equations driven by general stochastic processes \cite%
{lyons2002system, chevyrev2019canonical, friz2008uniformly,
friz2010differential}, the existence and smoothness of the density of
solutions \cite{cass2010densities, cass2015smoothness}, stochastic Taylor
expansions \cite{friz2008euler}, support theorem \cite{chevyrev2018support},
large deviations theory \cite{ledoux2002large} etc.

In Lyons' original framework \cite{lyons1998differential}, highly
oscillating paths are lifted to geometric rough paths in a nilpotent Lie
group. Geometric rough paths take values in a truncated group of characters
of the shuffle Hopf algebra \cite[Section 1.4]{reutenauer1993free} and
satisfy an abstract integration by parts formula. Limits of continuous
bounded variation paths in a rough path metric are geometric. For example,
Brownian sample paths enhanced with Stratonovich iterated integrals are
geometric rough paths. However, the geometric assumption can sometimes be
restrictive. It\^{o} iterated integrals do not satisfy the integration by
parts formula and It\^{o} Brownian rough paths are not geometric. Moreover,
non-geometric rough paths appear naturally when solving stochastic partial
differential equations \cite{gubinelli2010ramification}.

To provide a natural framework for non-geometric rough paths, Gubinelli \cite%
{gubinelli2010ramification} introduced branched rough paths and proved the
existence, uniqueness and continuity of the solution to branched rough
differential equations. Branched rough paths take values in a truncated
group of characters of Connes Kreimer Hopf algebra \cite{connes1998hopf}.
The multiplication of Connes Kreimer Hopf algebra is the free abelian
multiplication of monomials of trees which does not impose the integration
by parts formula. Branched rough paths can accomodate non-geometric
stochastic integrals and Connes Kreimer Hopf algebra provides a natural
algebraic setting for stochastic partial differential equations \cite%
{gubinelli2010ramification, hairer2014theory, bruned2019algebraic}.

The stability of the solution to rough differential equations is a central
result in rough path theory, commonly referred to as the Universal Limit
Theorem \cite[Theorem 5.3]{lyons2007differential}. Based on the uniform
decay of the differences between adjacent Picard iterations, Lyons \cite[%
Theorem 4.1.1]{lyons1998differential} proved the uniform continuity of the
solution with respect to the driving geometric rough path. Through
controlled paths \cite{gubinelli2004controlling}, Gubinelli \cite[Theorem 8.8%
]{gubinelli2010ramification} proved the Lipschitz continuity of the solution
to branched rough differential equations with respect to the initial value
and the driving rough path. Following the controlled paths approach, Friz
and Zhang \cite[Theorem 4.20]{friz2018differential} proved the Lipschitz
continuity of the solution to differential equations driven by branched
rough paths with jumps. Based on Davie's discrete approximation method \cite%
{davie2008differential} and by employing a sub-Riemannian geometry technique 
\cite{friz2008euler}, Friz and Victoir \cite[Theorem 10.26]%
{friz2010multidimensional} proved the explicit Lipschitz continuity of the
solution to differential equations driven by weak geometric rough paths over 
$%
\mathbb{R}
^{d}$ with respect to the initial value, the vector field and the driving
rough path.

In this paper, we will extend Friz and Victoir's approach and result \cite[%
Theorem 10.26]{friz2010multidimensional} to branched rough differential
equations. Classically, the sub-Riemannian geometry technique only applies
to geometric rough paths. Based on an isomorphism between Grossman Larson
Hopf algebra and Tensor Hopf algebra \cite{foissy2002finite,
chapoton2010free}, Boedihardjo and Chevyrev \cite{boedihardjo2019isomorphism}
proved that branched rough paths are isomorphic to a class of $\Pi $-rough
paths \cite{gyurko2008numerical, gyurko2016differential}. A $\Pi $-rough
path \cite{gyurko2008numerical, gyurko2016differential} is an inhomogeneous
geometric rough path, for which the regularities of the components of the
underlying path are not necessarily the same. By applying a sub-Riemannian
geometry technique to $\Pi $-rough paths, we prove in Theorem \ref{Theorem
main theorem} the explicit Lipschitz dependence of the solution to branched
rough differential equations.

Comparing with the current existing results \cite[Theorem 8.8]%
{gubinelli2010ramification}\cite[Theorem 4.20]{friz2018differential}, our
result only requires that the vector field is $Lip\left( \gamma \right) $
for $\gamma >p$ (instead of $Lip\left( \left[ p\right] +1\right) $) and
explicitly specifies the uniform Lipschitz continuity of the solution with
respect to the initial value, the vector field and the driving branched
rough path, with the constant only depending on $p,\gamma ,d$ (the roughness
of the driving branched rough path, the regularity of the vector field and
the dimension of the underlying driving path).

\section{Notations}

A rooted tree is a finite connected graph with no cycle and a special vertex
called root. We call a rooted tree a tree. We assume trees are non-planar
for which the children trees of each vertex are commutative. A forest is a
commutative monomial of trees. The degree $\left\vert \tau \right\vert $ of
a forest $\tau $ is given by the number of vertices in $\tau $.

For the label set $\mathcal{L}:=\left\{ 1,2,\dots ,d\right\} $, an $\mathcal{%
L}$-labeled forest is a forest for which each vertex is attached with a
label from $\mathcal{L}$. Let $\mathcal{T}_{\mathcal{L}}$($\mathcal{F}_{%
\mathcal{L}}$) denote the set of $\mathcal{L}$-labeled trees (forests). Let $%
\mathcal{T}_{\mathcal{L}}^{N}$($\mathcal{F}_{\mathcal{L}}^{N}$) denote the
subset of $\mathcal{T}_{\mathcal{L}}$($\mathcal{F}_{\mathcal{L}}$) of degree 
$1,2,\dots ,N$.

Let $G_{\mathcal{L}}^{N}$ denote the group of degree-$N$ characters of $%
\mathcal{L}$-labeled Connes Kreimer Hopf algebra \cite[p.214]{connes1998hopf}%
. $a\in G_{\mathcal{L}}^{N}$ iff $a:%
\mathbb{R}
\mathcal{F}_{\mathcal{L}}^{N}\rightarrow 
\mathbb{R}
$ is an $%
\mathbb{R}
$-linear map that satisfies%
\begin{equation*}
\left( a,\tau _{1}\tau _{2}\right) =\left( a,\tau _{1}\right) \left( a,\tau
_{2}\right)
\end{equation*}%
for every $\tau _{1},\tau _{2}\in \mathcal{F}_{\mathcal{L}}^{N}$, $%
\left\vert \tau _{1}\right\vert +\left\vert \tau _{2}\right\vert \leq N$,
where $\tau _{1}\tau _{2}$ denotes the commutative multiplication of
monomials of trees. The multiplication in $G_{\mathcal{L}}^{N}$ is induced
by the coproduct of Connes Kreimer Hopf algebra based on admissible cuts 
\cite[p.215]{connes1998hopf}: for $a,b\in G_{\mathcal{L}}^{N}$ and $\tau \in 
\mathcal{F}_{\mathcal{L}}^{N}$,%
\begin{equation*}
\left( ab,\tau \right) :=\sum_{\left( \tau \right) }\left( a,\tau _{\left(
1\right) }\right) \left( b,\tau _{\left( 2\right) }\right) .
\end{equation*}%
We equip $a\in G_{\mathcal{L}}^{N}$ with the norm:%
\begin{equation*}
\left\Vert a\right\Vert :=\max_{\tau \in \mathcal{F}_{\mathcal{L}%
}^{N}}\left\vert \left( a,\tau \right) \right\vert ^{\frac{1}{\left\vert
\tau \right\vert }}.
\end{equation*}

\begin{definition}[$p$-variation]
For a topological group $\left( G,\left\Vert \cdot \right\Vert \right) $,
suppose $X:\left[ 0,T\right] \rightarrow \left( G,\left\Vert \cdot
\right\Vert \right) $ is continuous. For $0\leq s\leq t\leq T$, denote%
\begin{equation*}
X_{s,t}:=X_{s}^{-1}X_{t}.
\end{equation*}%
For $p\geq 1$, define the $p$-variation of $X$ on $\left[ 0,T\right] $ as%
\begin{equation*}
\left\Vert X\right\Vert _{p-var,\left[ 0,T\right] }:=\sup_{D\subset \left[
0,T\right] }\left( \sum_{k,t_{k}\in D}\left\Vert
X_{t_{k},t_{k+1}}\right\Vert ^{p}\right) ^{\frac{1}{p}},
\end{equation*}%
where the supremum is taken over $D=\left\{ t_{k}\right\} _{k=0}^{n}$, $%
0=t_{0}<t_{1}<\cdots <t_{n}=T$, $n\geq 1$. Denote the set of continuous
paths from $\left[ 0,T\right] $ to $G$ with finite $p$-variation as $%
C^{p-var}\left( \left[ 0,T\right] ,G\right) $.
\end{definition}

For $p\geq 1$, let $\left[ p\right] $ denote the largest integer which is
less or equal to $p$.

\begin{definition}[branched $p$-rough path]
For $p\geq 1$, $X:\left[ 0,T\right] \rightarrow G_{\mathcal{L}}^{\left[ p%
\right] }$ is a branched $p$-rough path if $X$ is continuous and of finite $%
p $-variation.
\end{definition}

Let $L\left( 
\mathbb{R}
^{d},%
\mathbb{R}
^{e}\right) $ denote the set of continuous linear mappings from $%
\mathbb{R}
^{d}$ to $%
\mathbb{R}
^{e}$. We assume $Lip\left( \gamma \right) $ vector fields and their norms
are defined as in \cite[p.230, Definition 1.2.4]{lyons1998differential}. The
following theorem is the main result of the current paper.

\begin{theorem}
\label{Theorem main theorem}For $\gamma >p\geq 1$ and $i=1,2$, suppose $%
f^{i}:%
\mathbb{R}
^{e}\rightarrow L\left( 
\mathbb{R}
^{d},%
\mathbb{R}
^{e}\right) $ are $Lip\left( \gamma \right) $ vector fields and $X^{i}:\left[
0,T\right] \rightarrow G_{\mathcal{L}}^{\left[ p\right] }$ are branched $p$%
-rough paths over $%
\mathbb{R}
^{d}$. For $\xi ^{i}\in 
\mathbb{R}
^{e}$, $i=1,2$, let $y^{i}$ denote the unique solution of the branched rough
differential equation:%
\begin{equation*}
dy_{t}^{i}=f^{i}\left( y_{t}^{i}\right) dX_{t}^{i},\text{ }y_{0}^{i}=\xi
^{i}.
\end{equation*}%
Denote $\lambda :=\max_{i=1,2}\left\vert f^{i}\right\vert _{Lip\left( \gamma
\right) }$, $\omega \left( s,t\right) :=\sum_{i=1,2}\left\Vert
X^{i}\right\Vert _{p-var,\left[ s,t\right] }^{p}$ and%
\begin{equation*}
\rho _{p-\omega ;\left[ 0,T\right] }\left( X^{1},X^{2}\right) :=\max_{\tau
\in \mathcal{F}_{\mathcal{L}}^{\left[ p\right] }}\sup_{0\leq s<t\leq T}\frac{%
\left\vert \left( X_{s,t}^{1},\tau \right) -\left( X_{s,t}^{2},\tau \right)
\right\vert }{\omega \left( s,t\right) ^{\frac{\left\vert \tau \right\vert }{%
p}}}.
\end{equation*}%
Then there exists a constant $M>0$ that only depends on $\gamma ,p,d$ such
that%
\begin{eqnarray*}
&&\sup_{0\leq s<t\leq T}\frac{\left\vert \left( y_{t}^{1}-y_{s}^{1}\right)
-\left( y_{t}^{2}-y_{s}^{2}\right) \right\vert }{\omega \left( s,t\right) ^{%
\frac{1}{p}}} \\
&\leq &M\lambda \left( \left\vert \xi ^{1}-\xi ^{2}\right\vert +\lambda
^{-1}\left\vert f^{1}-f^{2}\right\vert _{Lip\left( \gamma -1\right) }+\rho
_{p-\omega ;\left[ 0,T\right] }\left( X^{1},X^{2}\right) \right) \exp \left(
M\lambda ^{p}\omega \left( 0,T\right) \right) .
\end{eqnarray*}
\end{theorem}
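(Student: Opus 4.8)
The plan is to transport the whole problem to the inhomogeneous geometric setting and then run Friz and Victoir's sub-Riemannian argument \cite[Theorem 10.26]{friz2010multidimensional} there. By the scaling symmetry of rough differential equations I first normalize $\lambda=1$ (rescaling the vector fields and the control $\omega$), so that, with $\Delta:=|\xi^1-\xi^2|+|f^1-f^2|_{Lip(\gamma-1)}+\rho_{p-\omega;[0,T]}(X^1,X^2)$, it suffices to prove
\begin{equation*}
\sup_{0\le s<t\le T}\frac{\bigl|(y^1_t-y^1_s)-(y^2_t-y^2_s)\bigr|}{\omega(s,t)^{1/p}}\le M\,\Delta\,\exp\bigl(M\,\omega(0,T)\bigr).
\end{equation*}
Using the graded Hopf algebra isomorphism between the Grossman--Larson and tensor Hopf algebras \cite{foissy2002finite, chapoton2010free} and the Boedihardjo--Chevyrev correspondence \cite{boedihardjo2019isomorphism}, I identify $G_{\mathcal{L}}^{[p]}$ with a truncated free nilpotent group over a graded alphabet $A=A_1\sqcup\cdots\sqcup A_{[p]}$, where a letter in $A_k$ has weight $k$, $|A_1|=d$, and the dimension of $\bigoplus_k A_k$ together with all structure constants depend only on $p$ and $d$. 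Under this identification each branched $p$-rough path $X^i$ corresponds to a $\Pi$-rough path $\mathbf{X}^i$ \cite{gyurko2008numerical, gyurko2016differential} whose weight-$k$ components are of size $O(\omega(s,t)^{k/p})$, the control $\omega$ and the distance $\rho_{p-\omega}$ transfer up to multiplicative constants depending only on $p,d$, and the branched RDE $dy^i=f^i(y^i)\,dX^i$ becomes a geometric RDE $dy^i=\sum_{a\in A}g^i_a(y^i)\,d(\mathbf{X}^i)^a$ on $\mathbb{R}^e$, where for $a\in A_1$ the field $g^i_a$ is a coordinate field of $f^i$ and for $a\in A_k$, $k\ge2$, the field $g^i_a$ is an elementary-differential vector field obtained from $f^i$ by $(k-1)$-fold differentiation. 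Hence $|f^i|_{Lip(\gamma)}=\lambda$ produces an anisotropic $Lip(\gamma)$ bound on $\{g^i_a\}_{a\in A}$ (a weight-$k$ direction carrying $\gamma-k+1$ derivatives), $|f^1-f^2|_{Lip(\gamma-1)}$ controls the differences $g^1_a-g^2_a$ in the one-degree-weaker anisotropic norms, and the $\mathbb{R}^e$-valued solution is unchanged by the identification, so it suffices to prove the displayed estimate for the geometric RDE.

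Next I establish the local estimate. Fix $[s,t]$ with $\omega(s,t)$ below a threshold depending only on $p,\gamma,d$ and follow \cite{friz2008euler, friz2010multidimensional}: approximate $\mathbf{X}^i$ on $[s,t]$ by a length-minimizing geodesic $h^i$ in the graded Carnot--Carath\'eodory geometry joining $\mathbf{X}^i_s$ to $\mathbf{X}^i_t$, whose weight-$k$ length components are $O(\omega(s,t)^{k/p})$; estimate the difference between the RDE solution $y^i$ on $[s,t]$ and the ODE solution driven by $h^i$ using Davie's Euler estimate \cite{davie2008differential} for $Lip(\gamma)$ geometric RDEs (legitimate because $\gamma>p$, the surplus producing genuinely higher-order remainders in $\omega(s,t)$); and compare the two geodesic ODEs by a routine ODE perturbation argument in terms of $|y^1_s-y^2_s|$, the differences $g^1_a-g^2_a$, and the geodesic distance between $\mathbf{X}^1_{s,t}$ and $\mathbf{X}^2_{s,t}$. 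Writing $e_u:=|y^1_u-y^2_u|$ (with $\lambda=1$), this yields, for such $[s,t]$,
\begin{equation*}
\bigl|(y^1_t-y^1_s)-(y^2_t-y^2_s)\bigr|\le C\,(e_s+\Delta)\,\omega(s,t)^{1/p},\qquad e_t\le e_s\bigl(1+C\,\omega(s,t)^{1/p}\bigr)+C\,\Delta\,\omega(s,t)^{1/p},
\end{equation*}
with $C$ depending only on $p,\gamma,d$, the higher-order Davie remainders having been absorbed using the threshold.

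To globalize I use the greedy/superadditivity patching of \cite[Chapter 10]{friz2010multidimensional}: pick $0=\tau_0<\cdots<\tau_N=T$ with $\omega(\tau_j,\tau_{j+1})$ equal to the threshold (so $N\le C(\omega(0,T)+1)$), iterate the bound on $e_{\cdot}$ to get $e_{\tau_j}\le C(e_0+\Delta)\exp(C\,\omega(0,\tau_j))$ and hence, by monotonicity of $\omega(0,\cdot)$, $e_u\le C(e_0+\Delta)\exp(C\,\omega(0,T))$ for all $u\in[0,T]$. Inserting this into the local increment estimate on an arbitrary $[s,t]$ — split along a greedy partition of $[s,t]$, using superadditivity of $\omega$ and $\sum_j\omega(\tau_j,\tau_{j+1})^{1/p}\le N^{1-1/p}\,\omega(s,t)^{1/p}$ with $N\le C(\omega(s,t)+1)$ — absorbs the extra polynomial factor into $\exp(C\,\omega(0,T))$ and gives the claimed inequality with $\lambda=1$; undoing the normalization restores the powers of $\lambda$, and $M$ depends only on $\gamma,p,d$ because every constant above does.

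The main obstacle is the second step. Friz and Victoir's sub-Riemannian machinery \cite[Theorem 10.26]{friz2010multidimensional} is built for homogeneous geometric rough paths over $\mathbb{R}^d$, so it must be redeveloped in the inhomogeneous ($\Pi$-rough) setting: existence of minimizing geodesics and the graded length estimates in the weighted nilpotent group, the anisotropic $Lip(\gamma)$ bounds for the elementary-differential fields $g_a$ and their differences, and the matching Euler/Davie local error bounds with mixed orders of homogeneity — all while checking that no constant secretly depends on the size of the auxiliary alphabet $A$, which is itself bounded only in terms of $p$ and $d$.
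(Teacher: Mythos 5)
Your overall route is the one the paper takes: normalize $\lambda $, transport the branched RDE to an inhomogeneous ($\Pi $-rough) geometric setting via the Grossman--Larson/tensor isomorphism, approximate on each small interval by bounded-variation paths with graded $1$-variation bounds, compare the resulting ODEs, and globalize by superadditivity. The reduction and the globalization steps are sound.

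The gap is in your local estimate, specifically the clause ``the higher-order Davie remainders having been absorbed using the threshold.'' Write $\Gamma _{s,t}^{i}:=y_{s,t}^{i}-y_{s,t}^{i,s,t}$ for the error between the RDE solution and the geodesic-ODE solution on $\left[ s,t\right] $. An Euler/Davie estimate gives $\left\vert \Gamma _{s,t}^{i}\right\vert \leq M\omega \left( s,t\right) ^{\left( \left[ p\right] +1\right) /p}$ for each $i$ separately, but with a constant that does not vanish as $e_{s}+\Delta \rightarrow 0$. Consequently the decomposition
\begin{equation*}
\left( y_{s,t}^{1}-y_{s,t}^{2}\right) =\left(
y_{s,t}^{1,s,t}-y_{s,t}^{2,s,t}\right) +\left( \Gamma _{s,t}^{1}-\Gamma
_{s,t}^{2}\right)
\end{equation*}
only yields $C\left( e_{s}+\Delta \right) \omega \left( s,t\right) ^{1/p}+O\left( \omega \left( s,t\right) ^{\left( \left[ p\right] +1\right) /p}\right) $ if you estimate the two remainders separately, and the second term cannot be absorbed into the first no matter how small the threshold is, because it is not proportional to $e_{s}+\Delta $. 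What is actually needed --- and what is the technical heart of both Friz--Victoir's Theorem 10.26 and of this paper --- is a bound of the form $\left\vert \Gamma _{s,t}^{1}-\Gamma _{s,t}^{2}\right\vert \leq M\left( e_{s}+\Delta \right) \omega \left( s,t\right) ^{\gamma /p}$ on the \emph{difference} of the remainders. This is not a consequence of two Euler estimates plus an ODE perturbation; it is obtained by showing that $\bar{\Gamma}:=\Gamma ^{1}-\Gamma ^{2}$ is an almost-additive functional, i.e. $\bar{\Gamma}_{s,u}-\bar{\Gamma}_{s,t}-\bar{\Gamma}_{t,u}$ is controlled by $\left( e_{s}+\Delta \right) \omega \left( s,u\right) ^{\gamma /p}$ plus a contractive multiple of $\left\vert \bar{\Gamma}_{s,t}\right\vert $, via two comparison lemmas for ODEs with inhomogeneous drivers (the paper's Lemma \ref{Lemma Abar} and Lemma \ref{Lemma Bbar}, the analogues of Lemmas 10.23 and 10.25 of \cite{friz2010multidimensional}), and then invoking the sewing-type Proposition 10.63 of \cite{friz2010multidimensional}. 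You do list ``matching Euler/Davie local error bounds with mixed orders of homogeneity'' among the obstacles, but the missing idea is qualitative rather than quantitative: the object to be estimated is the difference of the two local errors, and the mechanism is almost-additivity plus sewing, not absorption of individually small remainders.
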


The existence and uniqueness of the solution when the vector field is $%
Lip\left( \gamma \right) $ for $\gamma >p$ follow from \cite[Theorem 22]%
{lyons2015theory}. The $\rho _{p-\omega ;\left[ 0,T\right] }$ distance is
consistent with the $d_{\gamma }$-H\"{o}lder distance defined by Gubinelli 
\cite[p.710]{gubinelli2010ramification} where $\omega \left( s,t\right)
=\left\vert t-s\right\vert $.

Based on an isomorphism between branched rough paths and a class of $\Pi $%
-rough paths \cite{boedihardjo2019isomorphism}, our proof relies on an
inhomogeneous geodesic technique which extends the sub-Riemannian geometry
for geometric rough paths \cite{friz2008euler, friz2010multidimensional} to
branched rough paths.

Comparing with the current existing results \cite[Theorem 8.8]%
{gubinelli2010ramification} and \cite[Theorem 4.20]{friz2018differential},
our estimate only requires that the vector field is $Lip\left( \gamma
\right) $ for $\gamma >p$ while not $Lip\left( \left[ p\right] +1\right) $.
Moreover, our result specifies explicitly the Lipschitz dependence of the
solution with respect to the initial value, the vector field and the driving
branched rough path with the constant only depending on $\gamma ,p,d$.

\section{Proof}

In \cite{grossman1989hopf}, Grossman and Larson described several Hopf
algebras associated with certain family of trees. By deleting the additional
root, we call the Hopf algebra of non-planar forests with product \cite[$%
\left( 3.1\right) $]{grossman1989hopf} and coproduct \cite[p.199]%
{grossman1989hopf} the Grossman Larson Hopf algebra. Based on Foissy \cite[%
Section 8]{foissy2002finite} and Chapoton \cite{chapoton2010free}, Grossman
Larson algebra is freely generated by a collection of unlabeled trees.
Denote this collection of trees as $\mathcal{B}$. Denote the $\mathcal{L}$%
-labeled version of $\mathcal{B}$ as $\mathcal{B}_{\mathcal{L}}$ with $%
\mathcal{L}=\left\{ 1,2,\dots ,d\right\} $.

\begin{notation}
\label{Notation BL[p]}Let $\mathcal{B}_{\mathcal{L}}^{\left[ p\right]
}=\left\{ \nu _{1},\nu _{2},\dots ,\nu _{K}\right\} $ denote the set of
elements in $\mathcal{B}_{\mathcal{L}}$ of degree $1,\dots ,\left[ p\right] $%
.
\end{notation}

Then $K$ only depends on $p,d$.

\begin{notation}
Let $\mathcal{W}$ denote the set of finite sequences $k_{1}\cdots k_{m}$ for 
$k_{j}\in \left\{ 1,2,\dots ,K\right\} $, $j=1,2,\dots ,m$, including the
empty sequence denoted as $\epsilon $. For $k_{1}\cdots k_{m}\in \mathcal{W}$%
, define its degree%
\begin{equation*}
\left\Vert k_{1}\cdots k_{m}\right\Vert :=\left\vert \nu _{k_{1}}\right\vert
+\cdots +\left\vert \nu _{k_{m}}\right\vert
\end{equation*}%
where $\left\vert \nu _{j}\right\vert $ denotes the number of vertices in $%
\nu _{j}$ and $\left\Vert \epsilon \right\Vert :=0$.
\end{notation}

The set of infinite tensor series generated by $\mathcal{B}_{\mathcal{L}}^{%
\left[ p\right] }$ with the operation of tensor product forms an algebra. An
element $a$ of the algebra can be represented as $a=\sum_{w\in \mathcal{W}%
}\left( a,w\right) w$ for $\left( a,w\right) \in 
\mathbb{R}
$. For $n=0,1,2,\dots $, the set $\sum_{w\in \mathcal{W},\left\Vert
w\right\Vert >n}c_{w}w$ for $c_{w}\in 
\mathbb{R}
$ forms an ideal. Denote the quotient algebra as $\mathcal{A}^{n}$. Let $%
\mathfrak{G}$ denote the group of algebraic exponentials of Lie series
generated by $\left\{ 1,2,\dots ,K\right\} $ ($\mathfrak{G}$ is a group
based on Baker--Campbell--Hausdorff formula). Denote the group 
\begin{equation*}
\mathfrak{G}^{n}:=\mathfrak{G}\cap \mathcal{A}^{n}
\end{equation*}%
and denote the projection%
\begin{equation*}
\pi _{n}:\mathfrak{G}\rightarrow \mathfrak{G}^{n}.
\end{equation*}%
We equip $a\in \mathfrak{G}^{n}$ with the norm%
\begin{equation*}
\left\Vert a\right\Vert :=\sum_{w\in \mathcal{W},0<\left\Vert w\right\Vert
\leq n}\left\vert \left( a,w\right) \right\vert ^{\frac{1}{\left\Vert
w\right\Vert }}.
\end{equation*}%
$\mathfrak{G}^{n}$ is an inhomogeneous counterpart of the step-$n$ free
nilpotent Lie group \cite[p.235, Theorem 2.1.1]{lyons1998differential}.

\begin{notation}
Suppose $x=\left( x^{1},\dots ,x^{K}\right) :\left[ 0,T\right] \rightarrow 
\mathbb{R}
^{K}$ is a continuous bounded variation path. For $n=0,1,\dots $ and $0\leq
s\leq t\leq T$, define $S_{n}\left( x\right) _{s,t}\in \mathfrak{G}^{n}$ as,
for $k_{1}\cdots k_{m}\in \mathcal{W}$, $\left\Vert k_{1}\cdots
k_{m}\right\Vert \leq n$, 
\begin{equation*}
\left( S_{n}\left( x\right) _{s,t},k_{1}\cdots k_{m}\right)
:=\idotsint\limits_{s<u_{1}<\cdots <u_{m}<t}dx_{u_{1}}^{k_{1}}\cdots
dx_{u_{m}}^{k_{m}}
\end{equation*}%
with $\left( S_{n}\left( x\right) _{s,t},\epsilon \right) :=1$.
\end{notation}

$S_{n}\left( x\right) $ is an inhomogeneous counterpart of the step-$n$
signature \cite[Definition 1.1]{hambly2010uniqueness}. The following Lemma
is an inhomogeneous generalization of Proposition 7.64 \cite%
{friz2010multidimensional}.

\begin{lemma}
\label{Lemma bound on paths related Pi rough paths}For $i=1,2$, $C>0$, $%
\delta >0$ and an integer $n\geq 1$, suppose $h^{i}\in \mathfrak{G}^{n}$, $%
\left\Vert h^{i}\right\Vert \leq C$ and%
\begin{equation*}
\max_{w\in \mathcal{W},\left\Vert w\right\Vert \leq n}\left\vert \left(
h^{1}-h^{2},w\right) \right\vert \leq \delta .
\end{equation*}%
Then there exist $x^{i}\in C^{1-var}\left( \left[ 0,1\right] ,%
\mathbb{R}
^{K}\right) ,i=1,2$ such that%
\begin{equation*}
S_{n}\left( x^{i}\right) _{0,1}=h^{i},\text{ }i=1,2
\end{equation*}%
and a constant $M=M\left( C,p,d,n\right) >0$ such that%
\begin{equation*}
\max_{i=1,2}\left\Vert x^{i}\right\Vert _{1-var,\left[ 0,1\right] }\leq M%
\text{ and }\left\Vert x^{1}-x^{2}\right\Vert _{1-var,\left[ 0,1\right]
}\leq \delta M.
\end{equation*}
\end{lemma}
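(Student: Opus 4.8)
The plan is to reduce the statement to the classical geodesic approximation result for the homogeneous free nilpotent group and then transfer it across the dilation/word structure of $\mathfrak{G}^n$. First I would recall that $\mathfrak{G}^n$ sits inside the step-$n$ free nilpotent Lie group over $\mathbb{R}^K$: the generators $1,\dots,K$ have assigned degrees $|\nu_{k}|\in\{1,\dots,[p]\}$, and the grading on $\mathcal{W}$ by $\|\cdot\|$ is exactly the grading pulled back through this degree assignment. So an element $h\in\mathfrak{G}^n$ with $\|h\|\le C$ is, after forgetting the inhomogeneous grading, an element $\tilde h$ of the genuine step-$n$ free nilpotent group $G^{N}(\mathbb{R}^K)$ with $N=n$ (or, more carefully, with $N$ large enough that all words of $\|\cdot\|$-degree $\le n$ have ordinary length $\le N$; since each generator has degree $\ge 1$, $N=n$ suffices). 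The ordinary Carnot--Carathéodory norm of $\tilde h$ is controlled by $\|h\|$ up to a constant depending on $C,p,d,n$, because the two norms are both homogeneous-type gauges on a finite-dimensional group and one dominates a fixed power of the other on the relevant range.

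Next I would invoke Proposition 7.64 of \cite{friz2010multidimensional} (the homogeneous version this lemma generalizes): given $\tilde h^1,\tilde h^2\in G^{N}(\mathbb{R}^K)$ with CC-norm $\le C'$, there are bounded-variation paths $x^i:[0,1]\to\mathbb{R}^K$ whose step-$N$ signatures are $\tilde h^i$, with $\|x^i\|_{1\text{-var}}\lesssim C'$ and $\|x^1-x^2\|_{1\text{-var}}\lesssim d_{CC}(\tilde h^1,\tilde h^2)$. The signature $S_n(x)_{0,1}$ in the inhomogeneous sense is literally the same iterated-integral object as the ordinary step-$n$ signature (the only difference is which coefficients we retain, governed by $\|w\|\le n$ versus ordinary length $\le n$), so $S_n(x^i)_{0,1}=h^i$ follows once $\tilde S_N(x^i)=\tilde h^i$ for a suitable $N$; one must check that the $\|w\|\le n$ truncation of $\tilde h^i$ recovers $h^i$, which it does by construction of the isomorphism $\mathfrak{G}^n\hookrightarrow G^{N}(\mathbb{R}^K)$. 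The $1$-variation bounds then transfer verbatim.

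The one genuinely non-routine point is controlling $d_{CC}(\tilde h^1,\tilde h^2)$ by $\delta:=\max_{\|w\|\le n}|(h^1-h^2,w)|$ rather than by $\|h^1-h^2\|$. A bound on the maximal coefficient difference is not by itself a bound on a homogeneous distance, since a degree-$k$ coordinate contributes like (distance)$^k$; the naïve estimate only gives $d_{CC}(\tilde h^1,\tilde h^2)\lesssim \delta^{1/n}$, which is too weak. The fix is to use that $h^1$ is also bounded, $\|h^1\|\le C$, so $\tilde h^2=\tilde h^1\cdot g$ with $g=(\tilde h^1)^{-1}\tilde h^2$ having all coordinates $O(\delta)$ in the linearized (exponential) coordinates — here the boundedness of $h^1$ enters through the polynomial Baker--Campbell--Hausdorff expressions relating multiplicative coordinates to Lie-series (logarithm) coordinates, whose coefficients involve $h^1$. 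Once $\log g$ has all its graded components bounded by $C_1\delta$ (with $C_1=C_1(C,p,d,n)$), a dilation argument gives $d_{CC}(e,g)\le C_2\delta$ for $\delta\le 1$, and for $\delta>1$ one absorbs everything into the constant using $\|h^i\|\le C$. I expect this passage — tracking that the coefficient gap $\delta$ survives the nonlinear change of coordinates \emph{linearly}, using the a priori bound $C$ on $h^1$ — to be the main obstacle; the rest is bookkeeping about the two gradings and a citation of the homogeneous lemma.
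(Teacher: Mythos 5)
There is a genuine gap at the step you yourself flag as the crux. The claim that, once all graded components of $\log g$ are bounded by $C_{1}\delta$, ``a dilation argument gives $d_{CC}(e,g)\leq C_{2}\delta$'' is false: by the ball--box estimate, $d_{CC}(e,g)\asymp\max_{i}\left\vert \pi _{i}\left( \log g\right) \right\vert ^{1/i}$, so coefficient bounds of size $\delta$ in every degree only yield $d_{CC}(e,g)\lesssim \delta ^{1/n}$ --- exactly the ``na\"{\i}ve'' estimate you correctly dismissed two sentences earlier. Passing to logarithmic coordinates and using the boundedness of $h^{1}$ does not repair this; e.g.\ in the Heisenberg group, $g=\exp \left( \delta \left[ e_{1},e_{2}\right] \right) $ has all coefficients of order $\delta $ but $d_{CC}(e,g)\sim \delta ^{1/2}$. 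Consequently your route --- bound $\left\Vert x^{1}-x^{2}\right\Vert _{1-var}$ by the Carnot--Carath\'{e}odory distance between the endpoints and then bound that distance by $\delta$ --- cannot give a linear-in-$\delta$ conclusion. The actual mechanism, both in Proposition 7.64 of Friz--Victoir and in the paper, is different: one does \emph{not} join $h^{1}$ to $h^{2}$ by a short path. Instead both paths share a long common piece $z$ of $1$-variation $O(1)$, and the remaining pieces are obtained from one another by the small dilation $y^{i}\mapsto (1+\delta )^{\left\vert \nu _{i}\right\vert /n}y^{i}$; this shifts the top-degree part of the signature by $\delta m$ while changing the path by only $\left( (1+\delta )^{\left\vert \nu _{i}\right\vert /n}-1\right) \left\Vert y\right\Vert _{1-var}\leq \delta M$, even though the $CC$-distance between the two endpoints is merely of order $\delta ^{1/n}$. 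The reduction to the case where $h^{1},h^{2}$ agree below the top level is then achieved by an induction on $n$ using time reversal and concatenation (the $k^{i}=S_{n+1}(\overleftarrow{z^{i}})\otimes h^{i}$ trick), not by a change of coordinates.

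Two secondary points. First, $\mathfrak{G}^{n}$ is a quotient, not a subgroup, of a homogeneous free nilpotent group over $\mathbb{R}^{K}$, so ``forgetting the grading'' requires choosing a lift (e.g.\ via $\exp \circ \log $) and checking that the lift preserves the $O(\delta )$ coefficient closeness --- this is fixable since $\exp $ and $\log $ are polynomial, but it must be said. Second, a reduction to the homogeneous Proposition 7.64 is not hopeless \emph{if} you quote that proposition in its correct form, whose conclusion is already a $1$-variation bound linear in the coefficient differences $\max_{i}\left\vert \pi _{i}(g-h)\right\vert $ (for group elements of bounded homogeneous norm), with no Carnot--Carath\'{e}odory distance appearing; your difficulty arises from misremembering its conclusion as a bound by $d_{CC}$. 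Even then, the paper prefers to redo the induction in the inhomogeneous setting because the dilation exponents must be adapted to the degrees $\left\vert \nu _{i}\right\vert $ of the generators, which is the content of the first case of the paper's proof.
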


\begin{proof}
In the following proof, the constant $M$ may depend on $C,p,d,n$ and its
exact value may change.

Firstly, assume $\left( h^{1},w\right) =\left( h^{2},w\right) =0$ for $w\in 
\mathcal{W}$, $\left\Vert w\right\Vert =1,\dots ,n-1$. Then $h^{i}=1+l^{i}$
for $i=1,2$ with $l^{i}$ a homogeneous element of degree $n$ and $%
l_{2}=l_{1}+\delta m$ with $\left\Vert m\right\Vert \leq M$. Based on
similar proof as that of \cite[Theorem 7.32]{friz2010multidimensional} and 
\cite[Theorem 7.44]{friz2010multidimensional}, there exists $z\in
C^{1-var}\left( \left[ 0,1\right] ,%
\mathbb{R}
^{K}\right) $ such that $S_{n}\left( z\right) _{0,1}=1+l^{1}-m$ and $%
\left\Vert z\right\Vert _{1-var,\left[ 0,1\right] }\leq M$. Similarly, there
exists $y=\left( y^{i}\right) _{i=1}^{K}\in C^{1-var}\left( \left[ 0,1\right]
,%
\mathbb{R}
^{K}\right) $ such that $S_{n}\left( y\right) _{0,1}=1+m$ and $\left\Vert
y\right\Vert _{1-var,\left[ 0,1\right] }\leq M$. Let $x^{1}\ $be the
concatenation of $z$ and $y$ and let $x^{2}\ $be the concatenation of $z$
with $\widetilde{y}:=\left( \left( 1+\delta \right) ^{\left\vert \nu
_{i}\right\vert /n}y^{i}\right) _{i=1}^{K}$. Since $n\geq \left\vert \nu
_{j}\right\vert $ ($\mathfrak{G}^{n}$ does not involve $j$ when $\left\vert
\nu _{j}\right\vert >n$), we have $\left( 1+\delta \right) ^{\left\vert \nu
_{j}\right\vert /n}-1\leq \delta $ and%
\begin{equation*}
\left\Vert x^{1}-x^{2}\right\Vert _{1-var,\left[ 0,2\right] }\leq \delta
\left\Vert y\right\Vert _{1-var,\left[ 1,2\right] }.
\end{equation*}%
The first case is proved.

General case: we provide an inductive proof. The case $n=1$ follows from the
first case. Assuming the statement holds for elements in $\mathfrak{G}^{n}$,
we now prove that it holds for elements in $\mathfrak{G}^{n+1}$. By the
inductive hypothesis, there exist continuous bounded variation paths $z^{i}:%
\left[ 0,1\right] \rightarrow 
\mathbb{R}
^{K}$, $i=1,2$ such that $S_{n}\left( z^{i}\right) _{0,1}=\pi _{n}\left(
h^{i}\right) $, $i=1,2$,%
\begin{equation*}
\max_{i=1,2}\left\Vert z^{i}\right\Vert _{1-var,\left[ 0,1\right] }\leq M%
\text{ and }\left\Vert z^{1}-z^{2}\right\Vert _{1-var,\left[ 0,1\right]
}\leq \delta M.
\end{equation*}%
Denote 
\begin{equation*}
k^{i}:=b^{i}\otimes h^{i}\text{ with }b^{i}:=S_{n+1}\left( \overleftarrow{%
z^{i}}\right) \text{, }i=1,2\text{,}
\end{equation*}%
where $\overleftarrow{z^{i}}$ denotes the time reversal of $z^{i}$. Then for 
$i=1,2$, $\left\Vert k^{i}\right\Vert \leq M$ and $\left( k^{i},w\right) =0$
for $w\in \mathcal{W}$, $\left\Vert w\right\Vert =1,\dots ,n$. For $w\in 
\mathcal{W}$, $\left\Vert w\right\Vert =n+1$,%
\begin{equation*}
\left\vert \left( k^{1}-k^{2},w\right) \right\vert \leq \sum_{uv=w}\left(
\left\vert \left( b^{1},u\right) \right\vert \left\vert \left(
h^{1},v\right) -\left( h^{2},v\right) \right\vert +\left\vert \left(
b^{1},u\right) -\left( b^{2},u\right) \right\vert \left\vert \left(
h^{2},v\right) \right\vert \right) ,
\end{equation*}%
where $uv$ denotes the concatenation of $u$ and $v$. Since iterated
integrals are continuous in $1$-variation of the underlying path, combined
with the conditions on $h^{i}$, we have $\left\vert \left(
k^{1}-k^{2},w\right) \right\vert \leq \delta M$ for $w\in \mathcal{W}$, $%
\left\Vert w\right\Vert =n+1$. Based on the first case, there exist
continuous bounded variation paths $y^{i}:\left[ 0,1\right] \rightarrow 
\mathbb{R}
^{K},i=1,2$ such that%
\begin{equation*}
S_{n+1}\left( y^{i}\right) _{0,1}=k^{i},\text{ }i=1,2
\end{equation*}%
and%
\begin{equation*}
\max_{i=1,2}\left\Vert y^{1}\right\Vert _{1-var,\left[ 0,1\right] }\leq M%
\text{ and }\left\Vert y^{1}-y^{2}\right\Vert _{1-var,\left[ 0,1\right]
}\leq \delta M.
\end{equation*}%
For $i=1,2$, let $x^{i}$ be the concatenation of $z^{i}$ with $y^{i}$. The
proof is finished.
\end{proof}

Based on \cite{foissy2002finite, chapoton2010free}, Grossman Larson Hopf
algebra is isomorphic as a Hopf algebra to the Tensor Hopf algebra generated
by a collection of trees. By deleting the additional root, we assume
Grossman Larson Hopf algebra with product \cite[$\left( 3.1\right) $]%
{grossman1989hopf} and coproduct \cite[p.199]{grossman1989hopf} is a Hopf
algebra of forests. Denote the degree-$n$ truncated group of group-like
elements in Grossman Larson Hopf algebra as $\mathcal{G}_{\mathcal{L}}^{n}$.

\begin{notation}
\label{Notation isomorphism between Pi rough paths and branched rough paths}%
Denote the group isomorphism $\Phi :\mathfrak{G}^{\left[ p\right]
}\rightarrow \mathcal{G}_{\mathcal{L}}^{\left[ p\right] }$.
\end{notation}

\begin{lemma}
\label{Lemma existence of continuous bounded variation paths for Grossman
Larson group}For $i=1,2$, $C>0$ and $\delta >0$, suppose $g^{i}\in \mathcal{G%
}_{\mathcal{L}}^{\left[ p\right] }$, $\left\Vert g^{i}\right\Vert \leq C$ and%
\begin{equation*}
\max_{\tau \in \mathcal{F}_{\mathcal{L}}^{\left[ p\right] }}\left\vert
\left( g^{1}-g^{2},\tau \right) \right\vert \leq \delta .
\end{equation*}%
Then there exist $x^{i}\in C^{1-var}\left( \left[ 0,1\right] ,%
\mathbb{R}
^{K}\right) $, $i=1,2$ such that%
\begin{equation*}
\Phi \left( S_{\left[ p\right] }\left( x^{i}\right) _{0,1}\right) =g^{i},%
\text{ }i=1,2
\end{equation*}%
and a constant $M=M\left( C,p,d\right) >0$ such that%
\begin{equation*}
\max_{i=1,2}\left\Vert x^{i}\right\Vert _{1-var,\left[ 0,1\right] }\leq M%
\text{ and }\left\Vert x^{1}-x^{2}\right\Vert _{1-var,\left[ 0,1\right]
}\leq \delta M.
\end{equation*}
\end{lemma}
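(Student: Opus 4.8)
The plan is to pull the whole statement back to the inhomogeneous free nilpotent group $\mathfrak{G}^{[p]}$ through the isomorphism $\Phi$ of Notation \ref{Notation isomorphism between Pi rough paths and branched rough paths}, reduce it to Lemma \ref{Lemma bound on paths related Pi rough paths} with $n=[p]$, and then push the paths obtained there forward again by $\Phi$. The first thing I would make precise is that $\Phi$, although stated only as a group isomorphism, is the restriction to group-like elements of a \emph{linear} isomorphism $\widetilde{\Phi}$ between two finite-dimensional spaces, coming from the graded Hopf algebra isomorphism of Foissy and Chapoton \cite{foissy2002finite,chapoton2010free} truncated at degree $[p]$. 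Since the dimensions of these spaces, the cardinality of $\mathcal{F}_{\mathcal{L}}^{[p]}$ and the cardinality of $\{w\in\mathcal{W}:\|w\|\le[p]\}$ all depend only on $p,d$ (recall $K=K(p,d)$), the map $\widetilde{\Phi}^{-1}$, read in the coordinates $(\cdot,\tau)$ and $(\cdot,w)$, has operator norm between the corresponding $\ell^{\infty}$ structures bounded by some constant $M_{0}=M_{0}(p,d)$.

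With this in hand I would proceed in three steps. First, set $h^{i}:=\Phi^{-1}(g^{i})\in\mathfrak{G}^{[p]}$, so also $h^{i}=\widetilde{\Phi}^{-1}(g^{i})$. From $\|g^{i}\|\le C$, i.e. $\max_{\tau}|(g^{i},\tau)|^{1/|\tau|}\le C$, I get $|(g^{i},\tau)|\le\max(C,C^{[p]})$ for every $\tau$; applying the bound for $\widetilde{\Phi}^{-1}$ then gives $|(h^{i},w)|\le M_{0}\max(C,C^{[p]})$ for all $w$, and since $\|h^{i}\|$ is a sum of at most $\#\{w:\|w\|\le[p]\}$ terms of the form $|(h^{i},w)|^{1/\|w\|}$ with $1\le\|w\|\le[p]$, this yields $\|h^{i}\|\le C'$ for a constant $C'=C'(C,p,d)$. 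Using linearity, $\widetilde{\Phi}^{-1}(g^{1}-g^{2})=h^{1}-h^{2}$, so the hypothesis $\max_{\tau}|(g^{1}-g^{2},\tau)|\le\delta$ gives $\max_{w}|(h^{1}-h^{2},w)|\le M_{0}\delta=:\delta'$; as $h^{i}\in\mathcal{A}^{[p]}$, all coordinates of degree exceeding $[p]$ vanish, so the hypotheses of Lemma \ref{Lemma bound on paths related Pi rough paths} hold with $C,\delta,n$ replaced by $C',\delta',[p]$.

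Second, I would apply Lemma \ref{Lemma bound on paths related Pi rough paths} with these data to produce $x^{i}\in C^{1-var}([0,1],\mathbb{R}^{K})$, $i=1,2$, with $S_{[p]}(x^{i})_{0,1}=h^{i}$, $\max_{i}\|x^{i}\|_{1-var,[0,1]}\le M_{1}$ and $\|x^{1}-x^{2}\|_{1-var,[0,1]}\le\delta'M_{1}$, where $M_{1}=M(C',p,d,[p])$ depends only on $C,p,d$. Third, I would push forward by $\Phi$: $\Phi(S_{[p]}(x^{i})_{0,1})=\Phi(h^{i})=g^{i}$, and taking $M:=M_{1}\max(1,M_{0})$, which depends only on $C,p,d$, one obtains $\max_{i}\|x^{i}\|_{1-var,[0,1]}\le M$ and $\|x^{1}-x^{2}\|_{1-var,[0,1]}\le M_{0}M_{1}\delta\le\delta M$, which is the assertion.

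The only step that is not routine constant-tracking is the first one: confirming that $\Phi$ really does extend to a linear isomorphism that is bi-Lipschitz between the two coordinate $\ell^{\infty}$ structures with a constant depending only on $p,d$. This is where I would use the gradedness of the Foissy--Chapoton isomorphism \cite{foissy2002finite,chapoton2010free} together with the finite-dimensionality of the degree-$[p]$ truncations; once it is established, the mismatch between the exponents $1/\|w\|$ and $1/|\tau|$ in the two norm definitions costs only the crude factor $\max(C,C^{[p]})$ recorded above, and the rest is a direct appeal to Lemma \ref{Lemma bound on paths related Pi rough paths}.
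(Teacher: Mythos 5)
Your proposal is correct and follows essentially the same route as the paper: pull $g^{i}$ back to $h^{i}=\Phi^{-1}(g^{i})\in\mathfrak{G}^{[p]}$, use that $\Phi^{-1}$ is (the restriction of) a linear isomorphism between finite-dimensional graded spaces to transfer the bounds $\left\Vert h^{i}\right\Vert \leq M$ and $\max_{w}\left\vert \left( h^{1}-h^{2},w\right) \right\vert \leq M\delta $ with $M=M(C,p,d)$, and then apply Lemma \ref{Lemma bound on paths related Pi rough paths} with $n=[p]$. Your write-up simply makes explicit the constant-tracking and the linearity/gradedness of the Foissy--Chapoton isomorphism that the paper leaves implicit.
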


\begin{proof}
Denote $h^{i}:=\Phi ^{-1}\left( g^{i}\right) $, $i=1,2$. By $\left\Vert
g^{i}\right\Vert \leq C$, we have $\left\Vert h^{i}\right\Vert \leq M$, $%
i=1,2$ and%
\begin{equation*}
\sup_{w\in \mathcal{W},\left\Vert w\right\Vert \leq \left[ p\right]
}\left\vert \left( h^{1}-h^{2},w\right) \right\vert \leq M\max_{\tau \in 
\mathcal{F}_{\mathcal{L}}^{\left[ p\right] }}\left\vert \left(
g^{1}-g^{2},\tau \right) \right\vert \leq M\delta \text{.}
\end{equation*}%
Then the statement holds based on Lemma \ref{Lemma bound on paths related Pi
rough paths}.
\end{proof}

For $a\in \mathcal{L}$, denote by $\bullet _{a}$ the tree that has one
vertex and a label $a\in \mathcal{L}$ on the vertex. For $\mathcal{L}$%
-labeled trees $\left\{ \tau _{i}\right\} _{i=1}^{k}$ and a label $a\in 
\mathcal{L}$, denote by $\left[ \tau _{1}\cdots \tau _{k}\right] _{a}$ the
labeled tree obtained by grafting the roots of $\left\{ \tau _{i}\right\}
_{i=1}^{k}$ to a new root with a label $a\in \mathcal{L}$ on the new root.
Then $\left\vert \left[ \tau _{1}\cdots \tau _{k}\right] _{a}\right\vert
=\sum_{i=1}^{k}\left\vert \tau _{i}\right\vert +1$.

\begin{notation}
\label{Notation of f(tau)}For\ sufficiently smooth\ $f=\left( f_{1},\dots
,f_{d}\right) :%
\mathbb{R}
^{e}\rightarrow L\left( 
\mathbb{R}
^{d},%
\mathbb{R}
^{e}\right) $, define $f:\mathcal{T}_{\mathcal{L}}\rightarrow \left( 
\mathbb{R}
^{e}\rightarrow 
\mathbb{R}
^{e}\right) $ inductively as, for $a\in \mathcal{L}$ and $\tau _{i}\in 
\mathcal{T}_{\mathcal{L}}$, $i=1,\dots ,k$,%
\begin{equation*}
f\left( \bullet _{a}\right) :=f_{a}\text{ and }f\left( \left[ \tau
_{1}\cdots \tau _{k}\right] _{a}\right) :=\left( d^{k}f_{a}\right) \left(
f\left( \tau _{1}\right) \cdots f\left( \tau _{k}\right) \right)
\end{equation*}%
where $d^{k}f_{a}$ denotes the $k$-th Fr\'{e}chet derivative of $f_{a}$.
\end{notation}

Suppose $x\in C^{1-var}\left( \left[ 0,T\right] ,%
\mathbb{R}
^{K}\right) $, $f:%
\mathbb{R}
^{e}\rightarrow L\left( 
\mathbb{R}
^{K},%
\mathbb{R}
^{e}\right) $ is $Lip\left( 1\right) $ and $\xi \in 
\mathbb{R}
^{e}$. Denote by 
\begin{equation*}
\pi _{f}\left( 0,\xi ;x\right)
\end{equation*}%
the unique solution to the ODE%
\begin{equation*}
dy_{t}=f\left( y_{t}\right) dx_{t},\text{ }y_{0}=\xi \text{.}
\end{equation*}%
For $f_{j}:%
\mathbb{R}
^{e}\rightarrow 
\mathbb{R}
^{e}$, denote 
\begin{equation*}
\left\vert f_{j}\right\vert _{\infty }:=\sup_{y\in 
\mathbb{R}
^{e}}\left\vert f_{j}\left( y\right) \right\vert .
\end{equation*}
For $y:\left[ 0,T\right] \rightarrow 
\mathbb{R}
^{e}$ and $0\leq s\leq t\leq T$, denote 
\begin{equation*}
y_{s,t}:=y_{t}-y_{s}.
\end{equation*}

\begin{proposition}
\label{Proposition ODE estimates}Assume that

(i) $f=\left( f_{1},\dots ,f_{K}\right) :%
\mathbb{R}
^{e}\rightarrow L\left( 
\mathbb{R}
^{K},%
\mathbb{R}
^{e}\right) $ and $\widetilde{f}=\left( \widetilde{f}_{1},\dots ,\widetilde{f%
}_{K}\right) :%
\mathbb{R}
^{e}\rightarrow L\left( 
\mathbb{R}
^{K},%
\mathbb{R}
^{e}\right) $ are $Lip\left( 1\right) $. For $j=1,\dots ,K$, denote 
\begin{equation*}
M_{j}:=\max \left\{ \left\vert f_{j}\right\vert _{Lip\left( 1\right)
},\left\vert \widetilde{f}_{j}\right\vert _{Lip\left( 1\right) }\right\} .
\end{equation*}

(ii) $x=\left( x^{1},\dots ,x^{K}\right) $ and $\widetilde{x}=\left( 
\widetilde{x}^{1},\dots ,\widetilde{x}^{K}\right) $ are in $C^{1-var}\left( %
\left[ 0,T\right] ,%
\mathbb{R}
^{K}\right) $. For $j=1,\dots ,K$, denote 
\begin{equation*}
l_{j}:=\max \left\{ \left\Vert x^{j}\right\Vert _{1-var,\left[ 0,T\right]
},\left\Vert \widetilde{x}^{j}\right\Vert _{1-var,\left[ 0,T\right]
}\right\} .
\end{equation*}

(iii) $y_{0},\widetilde{y}_{0}\in 
\mathbb{R}
^{e}$ are initial values.

Denote $y=\pi _{f}\left( 0,y_{0};x\right) $ and $\widetilde{y}=\pi _{%
\widetilde{f}}\left( 0,\widetilde{y}_{0};\widetilde{x}\right) $. Then%
\begin{eqnarray}
&&\sup_{t\in \left[ 0,T\right] }\left\vert y_{0,t}-\widetilde{y}%
_{0,t}\right\vert  \label{ODE estimate 1} \\
&\leq &\sum_{j=1}^{K}\left( M_{j}l_{j}\left\vert y_{0}-\widetilde{y}%
_{0}\right\vert +M_{j}\left\Vert x^{j}-\widetilde{x}^{j}\right\Vert _{1-var,%
\left[ 0,T\right] }+l_{j}\left\vert f_{j}-\widetilde{f}_{j}\right\vert
_{\infty }\right) \exp \left( 2\sum_{j=1}^{K}M_{j}l_{j}\right)  \notag
\end{eqnarray}%
and%
\begin{eqnarray}
&&\sup_{t\in \left[ 0,T\right] }\left\vert y_{t}-\widetilde{y}_{t}\right\vert
\label{ODE estimate 2} \\
&\leq &\left( \left\vert y_{0}-\widetilde{y}_{0}\right\vert
+\sum_{j=1}^{K}M_{j}\left\Vert x^{j}-\widetilde{x}^{j}\right\Vert _{1-var,%
\left[ 0,T\right] }+\sum_{j=1}^{K}l_{j}\left\vert f_{j}-\widetilde{f}%
_{j}\right\vert _{\infty }\right) \exp \left(
2\sum_{j=1}^{K}M_{j}l_{j}\right) .  \notag
\end{eqnarray}
\end{proposition}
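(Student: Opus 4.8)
The plan is to prove the two estimates by a single Grönwall-type argument applied to the difference process $e_t := y_t - \widetilde y_t$, treating the driving signals, the vector fields and the initial conditions as the three independent sources of discrepancy. First I would write both solutions in integral form,
\begin{equation*}
y_t = y_0 + \sum_{j=1}^K \int_0^t f_j(y_u)\,dx_u^j,\qquad
\widetilde y_t = \widetilde y_0 + \sum_{j=1}^K \int_0^t \widetilde f_j(\widetilde y_u)\,d\widetilde x_u^j,
\end{equation*}
and subtract. The integrand difference $f_j(y_u)\,dx_u^j - \widetilde f_j(\widetilde y_u)\,d\widetilde x_u^j$ I would split in the standard telescoping way into three pieces: $\bigl(f_j(y_u)-f_j(\widetilde y_u)\bigr)\,dx_u^j$, controlled by $M_j|e_u|\,d\|x^j\|_{1\text{-}var}$ using the $Lip(1)$ bound and $l_j\ge\|x^j\|_{1\text{-}var}$; $\bigl(f_j(\widetilde y_u)-\widetilde f_j(\widetilde y_u)\bigr)\,dx_u^j$, controlled by $|f_j-\widetilde f_j|_\infty\,d\|x^j\|_{1\text{-}var}\le l_j|f_j-\widetilde f_j|_\infty$; and $\widetilde f_j(\widetilde y_u)\,d(x_u^j-\widetilde x_u^j)$, controlled by (a bound on $|\widetilde f_j|_\infty$, which by $Lip(1)$-ness and the convention in the cited definition I may take to be $\le M_j$) times $\|x^j-\widetilde x^j\|_{1\text{-}var}$. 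Summing over $j$ gives, for all $t$,
\begin{equation*}
|e_t| \le |e_0| + \sum_{j=1}^K M_j\|x^j-\widetilde x^j\|_{1\text{-}var} + \sum_{j=1}^K l_j|f_j-\widetilde f_j|_\infty + \sum_{j=1}^K M_j\int_0^t |e_u|\,d\|x^j\|_{1\text{-}var,[0,u]}.
\end{equation*}

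Next I would apply Grönwall's lemma in the form adapted to controls (the driving measure being $\sum_j M_j\,d\|x^j\|_{1\text{-}var}$, whose total mass on $[0,T]$ is $\le\sum_j M_j l_j$): this yields
\begin{equation*}
\sup_{t\in[0,T]}|e_t| \le \Bigl(|y_0-\widetilde y_0| + \sum_{j=1}^K M_j\|x^j-\widetilde x^j\|_{1\text{-}var} + \sum_{j=1}^K l_j|f_j-\widetilde f_j|_\infty\Bigr)\exp\Bigl(\sum_{j=1}^K M_j l_j\Bigr),
\end{equation*}
which is even slightly sharper than $(\ref{ODE estimate 2})$ (the factor $2$ in the exponent gives room, so a crude Grönwall constant suffices). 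That disposes of $(\ref{ODE estimate 2})$. For $(\ref{ODE estimate 1})$, the quantity $y_{0,t}-\widetilde y_{0,t} = (y_t-\widetilde y_t) - (y_0-\widetilde y_0) = e_t - e_0$, so I would bound $|e_t-e_0|$ directly from the integral identity above without adding back $|e_0|$: the constant-in-$u$ terms $\sum_j M_j\|x^j-\widetilde x^j\|_{1\text{-}var}+\sum_j l_j|f_j-\widetilde f_j|_\infty$ survive, the $\int|e_u|$ term is then bounded using the estimate for $\sup|e_u|$ just obtained together with $\int_0^t d\|x^j\|\le l_j$, producing the extra $\sum_j M_j l_j|y_0-\widetilde y_0|$ contribution and an overall $\exp(2\sum_j M_j l_j)$; collecting terms gives exactly $(\ref{ODE estimate 1})$.

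The only genuinely delicate point is the treatment of $|\widetilde f_j(\widetilde y_u)|$ in the third telescoping piece: a $Lip(1)$ function on $\mathbb R^e$ need not be globally bounded, so writing $\le M_j\|x^j-\widetilde x^j\|_{1\text{-}var}$ requires either the convention (used in the references) that $|f_j|_{Lip(1)}$ already controls $|f_j|_\infty$, or else a preliminary a priori bound on $\widetilde y$ keeping it in a ball on which $\widetilde f_j$ is bounded; I would invoke the former. Everything else is the standard Picard/Grönwall bookkeeping, and the appearance of two slightly different right-hand sides $(\ref{ODE estimate 1})$ and $(\ref{ODE estimate 2})$ is simply the distinction between estimating the increment $y_{0,t}-\widetilde y_{0,t}$ (which vanishes when the two ODEs coincide) and estimating $y_t-\widetilde y_t$ itself (which carries the initial discrepancy); handling both in one pass, as above, is the cleanest route.
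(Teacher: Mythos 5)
Your argument is correct and essentially the same as the paper's: integral form of both ODEs, a three-way telescoping of the integrand difference into the initial-value/flow part, the vector-field part and the driver part, followed by Gronwall. The only cosmetic difference is that the paper treats the term $\int_0^t f_j(\widetilde y_r)\,d(x_r^j-\widetilde x_r^j)$ by integration by parts (bounding it through $\sup_t|x_t^j-\widetilde x_t^j|$ before reverting to the $1$-variation norm), whereas you bound it directly by $|f_j|_\infty\,\Vert x^j-\widetilde x^j\Vert_{1-var,[0,T]}$; both are legitimate because the paper adopts the $Lip(\gamma)$ convention of Lyons (Stein), under which $|f_j|_\infty\le|f_j|_{Lip(1)}\le M_j$ --- exactly the delicate point you correctly identified and resolved.
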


\begin{proof}
Without loss of generality, assume $x_{0}=\widetilde{x}_{0}=0$. Since%
\begin{equation*}
\int_{0}^{t}f_{j}\left( \widetilde{y}_{r}\right) d\left( x_{r}^{j}-%
\widetilde{x}_{r}^{j}\right) =f_{j}\left( \widetilde{y}_{t}\right) \left(
x_{t}^{j}-\widetilde{x}_{t}^{j}\right) -\int_{0}^{t}\left( x_{r}^{j}-%
\widetilde{x}_{r}^{j}\right) df_{j}\left( \widetilde{y}_{r}\right) ,
\end{equation*}%
we have%
\begin{eqnarray*}
&&\left\vert y_{0,t}-\widetilde{y}_{0,t}\right\vert \\
&\leq &\left\vert y_{0}-\widetilde{y}_{0}\right\vert
\sum_{j=1}^{K}M_{j}l_{j}+\sum_{j=1}^{K}M_{j}\int_{0}^{t}\left\vert y_{0,r}-%
\widetilde{y}_{0,r}\right\vert \left\vert dx_{r}^{j}\right\vert
+\sum_{j=1}^{K}l_{j}\left\vert f_{j}-\widetilde{f}_{j}\right\vert _{\infty }
\\
&&+\left( 1+\sum_{j=1}^{K}M_{j}l_{j}\right) \sum_{j=1}^{K}M_{j}\sup_{t\in 
\left[ 0,T\right] }\left\vert x_{t}^{j}-\widetilde{x}_{t}^{j}\right\vert .
\end{eqnarray*}%
Since $x_{0}=\widetilde{x}_{0}=0$, we have $\sup_{t\in \left[ 0,T\right]
}\left\vert x_{t}^{j}-\widetilde{x}_{t}^{j}\right\vert \leq \left\Vert x^{j}-%
\widetilde{x}^{j}\right\Vert _{1-var,\left[ 0,T\right] }$. Based on
Gronwall's Lemma, the first inequality holds. The second inequality can be
proved similarly.
\end{proof}

For $\gamma >0$, let $\lfloor \gamma \rfloor $ denote the largest integer
which is strictly less than $\gamma $. Denote $I\left( x\right) :=x$ for $%
x\in 
\mathbb{R}
^{e}$. Recall that $\epsilon $ denotes the empty element in $\mathcal{W}$.
For $f^{i}:%
\mathbb{R}
^{e}\rightarrow L\left( 
\mathbb{R}
^{d},%
\mathbb{R}
^{e}\right) $, $i=1,2$ and $\nu \in \mathcal{B}_{\mathcal{L}}^{\left[ p%
\right] }$ in Notation \ref{Notation BL[p]}, denote $f^{i}\left( \nu \right) 
$ as in Notation \ref{Notation of f(tau)}.

\begin{notation}
\label{Notation F_i^w}Suppose $f^{i}:%
\mathbb{R}
^{e}\rightarrow L\left( 
\mathbb{R}
^{d},%
\mathbb{R}
^{e}\right) $, $i=1,2$ are $Lip\left( \gamma \right) $ for some $\gamma >1$.
For $k_{1}\cdots k_{m}\in \mathcal{W}$, $\left\Vert k_{1}\cdots
k_{m}\right\Vert \leq \lfloor \gamma \rfloor $, define inductively%
\begin{equation*}
F_{i}^{\epsilon }:=I\text{ and }F_{i}^{k_{1}\cdots
k_{m}}:=dF_{i}^{k_{2}\cdots k_{m}}\left( f^{i}\left( \nu _{k_{1}}\right)
\right) \text{, }i=1,2\text{,}
\end{equation*}%
where $dF_{i}^{k_{2}\cdots k_{m}}$ denotes the Fr\'{e}chet derivative of $%
F_{i}^{k_{2}\cdots k_{m}}$.
\end{notation}

The following simple Lemma is helpful when estimating the increments of
functions.

\begin{lemma}
\label{Lemma increments of functions}For $i=1,2$, suppose $q^{i}:%
\mathbb{R}
^{e}\rightarrow 
\mathbb{R}
$ and $r^{i}:%
\mathbb{R}
^{e}\rightarrow 
\mathbb{R}
$. For $a,b\in 
\mathbb{R}
^{e}$, 
\begin{eqnarray*}
&&\left( q^{1}r^{1}-q^{2}r^{2}\right) \left( a\right) -\left(
q^{1}r^{1}-q^{2}r^{2}\right) \left( b\right) \\
&=&\left( q^{1}\left( r^{1}-r^{2}\right) \right) \left( a\right) -\left(
q^{1}\left( r^{1}-r^{2}\right) \right) \left( b\right) \\
&&+\left( \left( q^{1}-q^{2}\right) r^{2}\right) \left( a\right) -\left(
\left( q^{1}-q^{2}\right) r^{2}\right) \left( b\right) \\
&=&:Q\left( a\right) -Q\left( b\right) +R\left( a\right) -R\left( b\right)
\end{eqnarray*}%
where $Q:=q^{1}\left( r^{1}-r^{2}\right) $ and $R:=\left( q^{1}-q^{2}\right)
r^{2}$.
\end{lemma}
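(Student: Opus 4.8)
The plan is to prove the claimed identity by the elementary add-and-subtract (telescoping) trick at the level of real-valued functions on $\mathbb{R}^{e}$, and then to evaluate the resulting identity of functions at the two points $a$ and $b$; no analytic input is needed here, only the distributive law for the pointwise product of functions.

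Concretely, I would first record the pointwise identity of functions $\mathbb{R}^{e}\to\mathbb{R}$,
\[
q^{1}r^{1}-q^{2}r^{2}=\bigl(q^{1}r^{1}-q^{1}r^{2}\bigr)+\bigl(q^{1}r^{2}-q^{2}r^{2}\bigr)=q^{1}\left(r^{1}-r^{2}\right)+\left(q^{1}-q^{2}\right)r^{2}=Q+R,
\]
where the first equality merely inserts and subtracts $q^{1}r^{2}$, the second factors each bracket, and the last is the definition of $Q$ and $R$. Evaluating the identity $q^{1}r^{1}-q^{2}r^{2}=Q+R$ at $a$ and at $b$ and subtracting then gives
\[
\left(q^{1}r^{1}-q^{2}r^{2}\right)(a)-\left(q^{1}r^{1}-q^{2}r^{2}\right)(b)=\bigl(Q(a)-Q(b)\bigr)+\bigl(R(a)-R(b)\bigr),
\]
and substituting $Q=q^{1}(r^{1}-r^{2})$ and $R=(q^{1}-q^{2})r^{2}$ into the right-hand side reproduces verbatim the two intermediate lines displayed in the statement, finishing the proof.

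There is essentially no obstacle here; the argument is a one-line algebraic manipulation. The only point demanding a little care is the bookkeeping in the splitting: one must retain $q^{1}$ rather than $q^{2}$ as the common factor in the first group and $r^{2}$ rather than $r^{1}$ in the second, so that the ``error'' factors produced are exactly $r^{1}-r^{2}$ and $q^{1}-q^{2}$; the symmetric choice, inserting $q^{2}r^{1}$, would yield an equally valid but differently grouped decomposition. The form recorded here is the one that, in the later estimates for increments of the functions $F_{i}^{w}$ along the solution paths, cleanly separates the vector-field difference from the path-and-solution difference, which is why it is stated in this particular shape.
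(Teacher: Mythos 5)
Your proof is correct and is exactly the intended argument: the lemma is the elementary add-and-subtract identity $q^{1}r^{1}-q^{2}r^{2}=q^{1}(r^{1}-r^{2})+(q^{1}-q^{2})r^{2}$ evaluated at $a$ and $b$, which is why the paper states it without a separate proof. Nothing is missing.
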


Lemma \ref{Lemma Abar} and Lemma \ref{Lemma Bbar} below are generalizations
of Lemma 10.23 \cite{friz2010multidimensional} and Lemma 10.25 \cite%
{friz2010multidimensional} respectively and apply to ODEs with inhomogeneous
drivers. Recall $\mathcal{B}_{\mathcal{L}}^{\left[ p\right] }=\left\{ \nu
_{1},\nu _{2},\dots ,\nu _{K}\right\} $ in Notation \ref{Notation BL[p]}.
Since $K$ denotes the number of elements in $\mathcal{B}_{\mathcal{L}}^{%
\left[ p\right] }$, $K$ only depends on $p,d$.

\begin{lemma}
\label{Lemma Abar}Fix $\gamma >p\geq 1$.

(i) Suppose $f^{i}:%
\mathbb{R}
^{e}\rightarrow L\left( 
\mathbb{R}
^{d},%
\mathbb{R}
^{e}\right) $, $i=1,2$ are $Lip\left( \gamma \right) $. Denote $\lambda
:=\max_{i=1,2}\left\vert f^{i}\right\vert _{Lip\left( \gamma \right) }$.

(ii) For $i=1,2$, suppose $x^{i}=\left( x^{i,1},\dots ,x^{i,K}\right) $ and $%
\widetilde{x}^{i}=\left( \widetilde{x}^{i,1},\dots ,\widetilde{x}%
^{i,K}\right) $ are paths in $C^{1-var}\left( \left[ 0,1\right] ,%
\mathbb{R}
^{K}\right) $ such that 
\begin{equation*}
S_{\left[ p\right] }\left( x^{i}\right) _{0,1}=S_{\left[ p\right] }\left( 
\widetilde{x}^{i}\right) _{0,1},\text{ }i=1,2.
\end{equation*}

(iii) For $C\geq 0$, $l\geq 0$ and $\delta \geq 0$, suppose for $j=1,\dots
,K $,%
\begin{eqnarray*}
\max_{i=1,2}\left\{ \left\Vert x^{i,j}\right\Vert _{1-var,\left[ 0,1\right]
},\left\Vert \widetilde{x}^{i,j}\right\Vert _{1-var,\left[ 0,1\right]
}\right\} &\leq &Cl^{\left\vert \nu _{j}\right\vert }, \\
\max \left\{ \left\Vert x^{1,j}-x^{2,j}\right\Vert _{1-var,\left[ 0,1\right]
},\left\Vert \widetilde{x}^{1,j}-\widetilde{x}^{2,j}\right\Vert _{1-var,%
\left[ 0,1\right] }\right\} &\leq &\delta Cl^{\left\vert \nu _{j}\right\vert
}.
\end{eqnarray*}

Denote vector fields $V^{i}:=\left( f^{i}\left( \nu _{1}\right) ,\dots
,f^{i}\left( \nu _{K}\right) \right) $, $i=1,2$. For $y_{0}^{i}\in 
\mathbb{R}
^{e}$, $i=1,2$, denote $y^{i}:=\pi _{V^{i}}\left( 0,y_{0}^{i};x^{i}\right) $
and $\widetilde{y}^{i}:=\pi _{V^{i}}\left( 0,y_{0}^{i};\widetilde{x}%
^{i}\right) $, $i=1,2$. Then there exists a constant $M=M\left( C,\gamma
,p,d\right) >0$ such that, when $\lambda l\leq 1$,%
\begin{eqnarray*}
&&\left\vert \left( y_{0,1}^{1}-\widetilde{y}_{0,1}^{1}\right) -\left(
y_{0,1}^{2}-\widetilde{y}_{0,1}^{2}\right) \right\vert \\
&\leq &M\left( \lambda l\right) ^{\gamma }\left( \left\vert
y_{0}^{1}-y_{0}^{2}\right\vert +\delta +\lambda ^{-1}\left\vert
f^{1}-f^{2}\right\vert _{Lip\left( \gamma -1\right) }\right) .
\end{eqnarray*}
\end{lemma}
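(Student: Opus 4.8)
The plan is to reduce the estimate to the ODE comparison estimate of Proposition \ref{Proposition ODE estimates} combined with a Taylor expansion of the solution along the driving path, quantified by the fact that $x^{i}$ and $\widetilde{x}^{i}$ have the same step-$\left[ p\right] $ signature. First I would recall the classical Euler/Davie expansion: for a $Lip\left( \gamma\right) $ vector field $V$ and a bounded variation driver $x$, the solution $y=\pi_{V}\left( 0,y_{0};x\right) $ satisfies
\begin{equation*}
y_{0,1}=\sum_{k_{1}\cdots k_{m}\in\mathcal{W},\ \left\Vert k_{1}\cdots k_{m}\right\Vert\leq\lfloor\gamma\rfloor}F^{k_{1}\cdots k_{m}}\left( y_{0}\right)\left( S_{\left[ p\right]}\left( x\right)_{0,1},k_{1}\cdots k_{m}\right)+\text{Rem},
\end{equation*}
where $F^{k_{1}\cdots k_{m}}$ is the iterated-derivative field of Notation \ref{Notation F_i^w} and the remainder Rem is $O\left(\left(\lambda l\right)^{\gamma}\right)$; here the monomials are grouped by degree $\left\Vert\cdot\right\Vert$ rather than by length, which is the only inhomogeneous wrinkle and is handled exactly as in \cite[Lemma 10.23]{friz2010multidimensional}, using that the $j$-th component $x^{j}$ has $1$-variation controlled by $Cl^{\left\vert\nu_{j}\right\vert}$ (so a word $w$ of degree $\left\Vert w\right\Vert$ contributes an iterated integral of size $\lesssim\left(Cl\right)^{\left\Vert w\right\Vert}$, giving the correct homogeneity $\left(\lambda l\right)^{\gamma}$ when the vector field factors are weighted by $\lambda$).

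Next I would apply this expansion to all four solutions $y^{1},\widetilde{y}^{1},y^{2},\widetilde{y}^{2}$. Because $S_{\left[ p\right]}\left( x^{i}\right)_{0,1}=S_{\left[ p\right]}\left(\widetilde{x}^{i}\right)_{0,1}$ by hypothesis (ii), the \emph{main (non-remainder) terms cancel between $y^{i}$ and $\widetilde{y}^{i}$} for each fixed $i$, since they share the same vector field $V^{i}$ and the same initial condition $y_{0}^{i}$; hence
\begin{equation*}
y_{0,1}^{i}-\widetilde{y}_{0,1}^{i}=\text{Rem}^{i}-\widetilde{\text{Rem}}^{i},
\end{equation*}
a quantity that is individually $O\left(\left(\lambda l\right)^{\gamma}\right)$. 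The point of the Lemma is to control not this individual difference but the \emph{second difference} across $i=1,2$. So I would write out the remainder terms via the integral form of Taylor's theorem and track, term by term, the dependence on $\left( f^{1}-f^{2}\right)$, on $\left( y_{0}^{1}-y_{0}^{2}\right)$, and on $\left( x^{1,j}-x^{2,j}\right)$ (equivalently $\widetilde{x}^{1,j}-\widetilde{x}^{2,j}$), each of the latter being $\leq\delta Cl^{\left\vert\nu_{j}\right\vert}$ by hypothesis (iii). Lemma \ref{Lemma increments of functions} is the bookkeeping device here: it splits each product $q^{1}r^{1}-q^{2}r^{2}$ occurring in the second difference into a piece carrying a factor $\left( r^{1}-r^{2}\right)$ and a piece carrying $\left( q^{1}-q^{2}\right)$, so that every summand ends up with exactly one ``difference'' factor — either $\delta$, or $\left\vert y_{0}^{1}-y_{0}^{2}\right\vert$, or $\lambda^{-1}\left\vert f^{1}-f^{2}\right\vert_{Lip\left(\gamma-1\right)}$ — multiplied by a uniformly bounded (in terms of $C,\gamma,p,d$) coefficient and the homogeneous weight $\left(\lambda l\right)^{\gamma}$. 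Finally, to handle the $\left( y_{0}^{1}-y_{0}^{2}\right)$-dependence appearing \emph{inside} the fields $F_{i}^{w}\left( y_{0}^{i}\right)$ and at intermediate times of the flows, I would invoke Proposition \ref{Proposition ODE estimates} to bound $\sup_{t}\left\vert y_{t}^{1}-y_{t}^{2}\right\vert$ and the analogous quantities by $\left\vert y_{0}^{1}-y_{0}^{2}\right\vert+\lambda^{-1}\left\vert f^{1}-f^{2}\right\vert_{\infty}$ times a constant (using $\lambda l\leq 1$ to absorb the exponential $\exp\left( 2\sum M_{j}l_{j}\right)$ into $M$, since $M_{j}l_{j}\lesssim\lambda l\cdot\left(\lambda l\right)^{\left\vert\nu_{j}\right\vert-1}\lesssim 1$), and note $\left\vert f^{1}-f^{2}\right\vert_{\infty}\leq\left\vert f^{1}-f^{2}\right\vert_{Lip\left(\gamma-1\right)}$.

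The hard part will be the careful Taylor-remainder bookkeeping in the inhomogeneous setting: one must expand $y^{i}$ and $\widetilde{y}^{i}$ to the \emph{same} algebraic order so that the signature-level cancellation is exact, then Taylor-expand the remainders once more in the parameters $f,y_{0},x$ to extract a clean factor of $\delta+\left\vert y_{0}^{1}-y_{0}^{2}\right\vert+\lambda^{-1}\left\vert f^{1}-f^{2}\right\vert_{Lip\left(\gamma-1\right)}$, all while keeping every constant independent of $l$ and of the number of steps. The degree-grading (rather than length-grading) of $\mathcal{W}$ means the combinatorics of which iterated integrals survive truncation is slightly more delicate than in the homogeneous case, but the homogeneity assumption (iii) — $1$-variation of the $j$-th component scaled by $l^{\left\vert\nu_{j}\right\vert}$ — is designed precisely so that the usual degree-counting argument of \cite[Lemma 10.23]{friz2010multidimensional} goes through verbatim, with $l$ playing the role of the mesh and $\lambda l\leq 1$ ensuring all geometric-type series converge with $C,\gamma,p,d$-dependent sums.
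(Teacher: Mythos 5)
Your proposal is correct and follows essentially the same route as the paper: an Euler-type expansion of the solutions whose main terms cancel because $S_{\left[ p\right] }\left( x^{i}\right) _{0,1}=S_{\left[ p\right] }\left( \widetilde{x}^{i}\right) _{0,1}$, followed by a term-by-term comparison of the remainders across $i=1,2$ using Lemma \ref{Lemma increments of functions}, Proposition \ref{Proposition ODE estimates} and the degree-weighted $1$-variation bounds of hypothesis (iii). The only difference is organizational: the paper first reduces to the case of trivial signature by concatenating $x^{i}$ with the time-reversal of $\widetilde{x}^{i}$ (so the expansion has no main terms at all), whereas you keep four expansions and cancel the main terms pairwise; note that this cancellation at order $\lfloor \gamma \rfloor $ requires the standard reduction to $\gamma \in (p,\left[ p\right] +1]$ so that $\lfloor \gamma \rfloor =\left[ p\right] $ matches the level at which the signatures agree, and that the bound on $\sup_{t}\left\vert y_{t}^{1}-y_{t}^{2}\right\vert $ from Proposition \ref{Proposition ODE estimates} also carries a $\delta $ term coming from $\left\Vert x^{1,j}-x^{2,j}\right\Vert _{1-var}$.
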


\begin{proof}
Without loss of generality, assume $\gamma \in (p,\left[ p\right] +1]$ and
denote $N:=\left[ p\right] $. The constant $M$ in the following proof may
depend on $C,\gamma ,p,d$ and its exact value may change.

First case, assume $\widetilde{x}^{1}=\widetilde{x}^{2}=0$ and we want to
estimate $\left\vert y_{0,1}^{1}-y_{0,1}^{2}\right\vert $. By iteratively
applying the fundamental theorem of calculus, for $i=1,2$,%
\begin{eqnarray*}
y_{0,1}^{i} &=&\sum_{\left\Vert k_{1}\cdots k_{m}\right\Vert =N}\quad
\idotsint\limits_{0<u_{1}<\cdots <u_{m}<1}\left( F_{i}^{k_{1}\cdots
k_{m}}\left( y_{u_{1}}^{i}\right) -F_{i}^{k_{1}\cdots k_{m}}\left(
y_{0}^{i}\right) \right) dx_{u_{1}}^{i,k_{1}}\cdots dx_{u_{m}}^{i,k_{m}} \\
&&+\sum_{\substack{ \left\Vert k_{1}\cdots k_{m}\right\Vert >N  \\ %
\left\Vert k_{2}\cdots k_{m}\right\Vert <N}}\quad
\idotsint\limits_{0<u_{1}<\cdots <u_{m}<1}F_{i}^{k_{1}\cdots k_{m}}\left(
y_{u_{1}}^{i}\right) dx_{u_{1}}^{i,k_{1}}\cdots dx_{u_{m}}^{i,k_{m}}
\end{eqnarray*}%
For $i=1,2$, denote $F_{i}^{N}:=\left( F_{i}^{w}\right) _{w\in \mathcal{W}%
,\left\Vert w\right\Vert =N}$ with $F_{i}^{w}$ in Notation \ref{Notation
F_i^w} and denote $x_{u,1}^{i,N}:=\left( x_{u,1}^{i,w}\right) _{w\in 
\mathcal{W},\left\Vert w\right\Vert =N}$ where 
\begin{equation*}
x_{u,1}^{i,k_{1}\cdots k_{m}}:=\idotsint\limits_{u<u_{1}<\cdots
<u_{m}<1}dx_{u_{1}}^{i,k_{1}}\cdots dx_{u_{m}}^{i,k_{m}}\text{ for }%
k_{1}\cdots k_{m}\in \mathcal{W}.
\end{equation*}%
Since $\lambda l\leq 1$, we have $\left\vert y_{0,\cdot }^{i}\right\vert
_{\infty ,\left[ 0,1\right] }\leq M\lambda l$, $i=1,2$. Separate the $%
Lip\left( \gamma -N+1\right) $ term $\left( d^{N-1}f^{i}\right) \left(
f^{i}\right) ^{N-1}$ from $F_{i}^{N}$, $i=1,2$ (the rest terms are $%
Lip\left( 2\right) $). Based on Lemma \ref{Lemma increments of functions},
by adapting the proof of Lemma 10.22 \cite{friz2010multidimensional} and
combining with assumption (iii), we have 
\begin{eqnarray}
&&\left\vert \int_{0}^{1}\left( F_{1}^{N}\left( y_{u}^{1}\right)
-F_{1}^{N}\left( y_{0}^{1}\right) \right) dx_{u,1}^{1,N}-\int_{0}^{1}\left(
F_{2}^{N}\left( y_{u}^{2}\right) -F_{2}^{N}\left( y_{0}^{2}\right) \right)
dx_{u,1}^{2,N}\right\vert  \label{inner estimate 1} \\
&\leq &M\left( \lambda l\right) ^{N}\left\vert y_{0,\cdot }^{1}-y_{0,\cdot
}^{2}\right\vert _{\infty ,\left[ 0,1\right] }  \notag \\
&&+M\left( \lambda l\right) ^{\gamma }\left( \left\vert
y_{0}^{1}-y_{0}^{2}\right\vert +\lambda ^{-1}\left\vert
f^{1}-f^{2}\right\vert _{Lip\left( \gamma -1\right) }\right)  \notag \\
&&+M\delta \left( \lambda l\right) ^{N+1}  \notag
\end{eqnarray}

For $j=1,\dots ,K$,%
\begin{equation*}
\left\vert f^{1}\left( \nu _{j}\right) -f^{2}\left( \nu _{j}\right)
\right\vert _{\infty }\leq M\lambda ^{\left\vert \nu _{j}\right\vert
-1}\left\vert f^{1}-f^{2}\right\vert _{Lip\left( \gamma -1\right) }\text{.}
\end{equation*}%
Since $\lambda l\leq 1$, based on $\left( \ref{ODE estimate 1}\right) $, we
have%
\begin{equation*}
\left\vert y_{0,\cdot }^{1}-y_{0,\cdot }^{2}\right\vert _{\infty ,\left[ 0,1%
\right] }\leq M\lambda l\left( \left\vert y_{0}^{1}-y_{0}^{2}\right\vert
+\delta +\lambda ^{-1}\left\vert f^{1}-f^{2}\right\vert _{Lip\left( \gamma
-1\right) }\right) .
\end{equation*}%
Putting the estimate into $\left( \ref{inner estimate 1}\right) $, we get%
\begin{eqnarray}
&&\left\vert \int_{0}^{1}\left( F_{1}^{N}\left( y_{u}^{1}\right)
-F_{1}^{N}\left( y_{0}^{1}\right) \right) dx_{u,1}^{1,N}-\int_{0}^{1}\left(
F_{2}^{N}\left( y_{u}^{2}\right) -F_{2}^{N}\left( y_{0}^{2}\right) \right)
dx_{u,1}^{2,N}\right\vert  \label{inner Lemma Abar 1} \\
&\leq &M\left( \lambda l\right) ^{\gamma }\left( \left\vert
y_{0}^{1}-y_{0}^{2}\right\vert +\delta +\lambda ^{-1}\left\vert
f^{1}-f^{2}\right\vert _{Lip\left( \gamma -1\right) }\right)  \notag
\end{eqnarray}

On the other hand, for $w\in \mathcal{W}$, 
\begin{eqnarray*}
&&\idotsint\limits_{0<u_{1}<\cdots <u_{m}<1}\left( F_{1}^{w}\left(
y_{u_{1}}^{1}\right) dx_{u,1}^{1,w}-F_{2}^{w}\left( y_{u_{1}}^{2}\right)
dx_{u,1}^{2,w}\right) \\
&=&\idotsint\limits_{0<u_{1}<\cdots <u_{m}<1}\left( F_{1}^{w}\left(
y_{u_{1}}^{1}\right) -F_{1}^{w}\left( y_{u_{1}}^{2}\right) \right)
dx_{u,1}^{1,w} \\
&&+\idotsint\limits_{0<u_{1}<\cdots <u_{m}<1}\left(
F_{1}^{w}-F_{2}^{w}\right) \left( y_{u_{1}}^{2}\right) dx_{u,1}^{1,w} \\
&&+\idotsint\limits_{0<u_{1}<\cdots <u_{m}<1}F_{2}^{w}\left(
y_{u_{1}}^{2}\right) \left( dx_{u,1}^{1,w}-dx_{u,1}^{2,w}\right) .
\end{eqnarray*}%
Suppose $w=kw_{1}$, where $k\in \left\{ 1,\dots ,K\right\} $ and $w,w_{1}\in 
\mathcal{W}$, $\left\Vert w\right\Vert >N$, $\left\Vert w_{1}\right\Vert <N$%
. Then $\max_{i=1,2}\left\vert F_{i}^{w}\right\vert _{Lip\left( 1\right)
}\leq M\lambda ^{\left\Vert w\right\Vert }$ and $\left\vert
F_{1}^{w}-F_{2}^{w}\right\vert _{\infty }\leq M\lambda ^{\left\Vert
w\right\Vert -1}\left\vert f^{1}-f^{2}\right\vert _{Lip\left( \gamma
-1\right) }$. Since $\lambda l\leq 1$, combined with $\left( \ref{ODE
estimate 2}\right) $ and assumption (iii), we have%
\begin{eqnarray}
&&\left\vert \quad \idotsint\limits_{0<u_{1}<\cdots <u_{m}<1}\left(
F_{1}^{w}\left( y_{u_{1}}^{1}\right) dx_{u,1}^{1,w}-F_{2}^{w}\left(
y_{u_{1}}^{2}\right) dx_{u,1}^{2,w}\right) \right\vert
\label{inner Lemma Abar 2} \\
&\leq &M\left( \lambda l\right) ^{N+1}\left( \left\vert
y_{0}^{1}-y_{0}^{2}\right\vert +\delta +\lambda ^{-1}\left\vert
f^{1}-f^{2}\right\vert _{Lip\left( \gamma -1\right) }\right) .  \notag
\end{eqnarray}

Since we assumed that $\lambda l\leq 1$, combine $\left( \ref{inner Lemma
Abar 1}\right) $ with $\left( \ref{inner Lemma Abar 2}\right) $, we have%
\begin{equation*}
\left\vert y_{0,1}^{1}-y_{0,1}^{2}\right\vert \leq M\left( \lambda l\right)
^{\gamma }\left( \left\vert y_{0}^{1}-y_{0}^{2}\right\vert +\delta +\lambda
^{-1}\left\vert f^{1}-f^{2}\right\vert _{Lip\left( \gamma -1\right) }\right)
.
\end{equation*}

General case. For $i=1,2$, let $z^{i}:=\overleftarrow{\widetilde{x}^{i}}%
\sqcup x^{i}$ be the concatenation of the time reversal of $\widetilde{x}%
^{i} $ with $x^{i}$. Reparametrize $z^{i}$ to be from $\left[ 0,1\right] $
to $%
\mathbb{R}
^{K}$. Based on the assumption (ii) and (iii), $S_{\left[ p\right] }\left(
z^{i}\right) _{0,1}=1$, $i=1,2$ and $\max_{i=1,2}\left\Vert
z^{i,j}\right\Vert _{1-var,\left[ 0,1\right] }\leq 2Cl^{\left\vert \nu
_{j}\right\vert }$, $\left\Vert z^{1,j}-z^{2,j}\right\Vert _{1-var,\left[ 0,1%
\right] }\leq 2\delta Cl^{\left\vert \nu _{j}\right\vert }$, $j=1,\dots ,K$.
Since for $i=1,2$,%
\begin{equation*}
y_{0,1}^{i}-\widetilde{y}_{0,1}^{i}=\pi _{V^{i}}\left( 0,\pi _{V^{i}}\left(
0,y_{0}^{i};\widetilde{x}^{i}\right) _{1};z^{i}\right) _{0,1}\text{.}
\end{equation*}%
Then the result follows by applying the first case to $z^{i}$, $i=1,2$ and
combining with $\left( \ref{ODE estimate 2}\right) $.
\end{proof}

For $\gamma >1$, denote $\left\{ \gamma \right\} :=\gamma -\lfloor \gamma
\rfloor $.

\begin{lemma}
\label{Lemma Bbar}Fix $\gamma >p\geq 1$.

(i) For $i=1,2$, suppose $f^{i}:%
\mathbb{R}
^{e}\rightarrow L\left( 
\mathbb{R}
^{d},%
\mathbb{R}
^{e}\right) $ are $Lip\left( \gamma \right) $. Denote $\lambda
:=\max_{i=1,2}\left\vert f^{i}\right\vert _{Lip\left( \gamma \right) }$.

(ii) For $i=1,2$, suppose $x^{i}=\left( x^{i,1},\dots ,x^{i,K}\right) \in
C^{1-var}\left( \left[ 0,1\right] ,%
\mathbb{R}
^{K}\right) $ and there exist constants $C\geq 0,\delta \geq 0$ and $l\geq 0$
such that for $j=1,\dots ,K$,%
\begin{eqnarray*}
\max_{i=1,2}\left\Vert x^{i,j}\right\Vert _{1-var,\left[ 0,1\right] } &\leq
&Cl^{\left\vert \nu _{j}\right\vert }, \\
\left\Vert x^{1,j}-x^{2,j}\right\Vert _{1-var,\left[ 0,1\right] } &\leq
&\delta Cl^{\left\vert \nu _{j}\right\vert }.
\end{eqnarray*}

Denote vector fields $V^{i}:=\left( f^{i}\left( \nu _{1}\right) ,\dots
,f^{i}\left( \nu _{K}\right) \right) $, $i=1,2$. For $y_{0}^{i},\widetilde{y}%
_{0}^{i}\in 
\mathbb{R}
^{e}$, $i=1,2$, denote $y^{i}:=\pi _{V^{i}}\left( 0,y_{0}^{i};x^{i}\right) $
and $\widetilde{y}^{i}:=\pi _{V^{i}}\left( 0,\widetilde{y}%
_{0}^{i};x^{i}\right) $, $i=1,2$. Then there exists a constant $M=M\left(
C,\gamma ,p,d\right) >0$ such that, when $\lambda l\leq 1$,%
\begin{eqnarray*}
&&\left\vert \left( y_{0,1}^{1}-\widetilde{y}_{0,1}^{1}\right) -\left(
y_{0,1}^{2}-\widetilde{y}_{0,1}^{2}\right) \right\vert \\
&\leq &M\lambda l\left\vert \left( y_{0}^{1}-\widetilde{y}_{0}^{1}\right)
-\left( y_{0}^{2}-\widetilde{y}_{0}^{2}\right) \right\vert \\
&&+M\lambda l\left( \left\vert y_{0}^{1}-\widetilde{y}_{0}^{1}\right\vert
+\left\vert y_{0}^{2}-\widetilde{y}_{0}^{2}\right\vert \right) \left(
\left\vert \widetilde{y}_{0}^{1}-\widetilde{y}_{0}^{2}\right\vert +\delta
+\lambda ^{-1}\left\vert f^{1}-f^{2}\right\vert _{Lip\left( \gamma -1\right)
}\right) \\
&&+M\left( \lambda l\right) ^{\lfloor \gamma \rfloor }\left( \left\vert
y_{0}^{1}-\widetilde{y}_{0}^{1}\right\vert +\left\vert y_{0}^{2}-\widetilde{y%
}_{0}^{2}\right\vert \right) ^{\left\{ \gamma \right\} }\left( \left\vert 
\widetilde{y}_{0}^{1}-\widetilde{y}_{0}^{2}\right\vert +\delta +\lambda
^{-1}\left\vert f^{1}-f^{2}\right\vert _{Lip\left( \gamma -1\right) }\right)
\\
&&+M\lambda l\delta \left\vert y_{0}^{2}-\widetilde{y}_{0}^{2}\right\vert .
\end{eqnarray*}
\end{lemma}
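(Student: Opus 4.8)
The plan is to mimic the structure of the proof of Lemma \ref{Lemma Abar}, but now tracking the dependence on the \emph{difference of initial conditions} rather than the difference of driving paths. As before, I would reduce to the case $\gamma \in (p, [p]+1]$ and write $N := [p]$. Fix $i \in \{1,2\}$ and, by iteratively applying the fundamental theorem of calculus to the ODE $dy^i = V^i(y^i)\,dx^i$, expand $y^i_{0,1} - \widetilde y^i_{0,1}$ as a sum over words $w = k_1\cdots k_m \in \mathcal W$ of iterated integrals of the form $\idotsint F_i^w(y^i_{u_1})\,dx^{i,k_1}_{u_1}\cdots dx^{i,k_m}_{u_m}$ minus the same with $y^i$ replaced by $\widetilde y^i$, separating the ``principal'' words with $\|w\| = N$ (whose integrand carries the $F_i^N(y^i_{u_1}) - F_i^N(y^i_0)$ Taylor-remainder structure, using the fact that $S_{[p]}(x^i)$ is fixed so that the constant terms $F_i^w(y^i_0)$ integrate against $(S_N(x^i)_{0,1},w)$ in a way that only the remainder survives) from the ``higher'' words with $\|w\| > N \geq \|k_2\cdots k_m\|$, exactly as in Lemma \ref{Lemma Abar}.

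The key quantitative inputs are the same bounds used before: $\max_{i}|F_i^w|_{Lip(1)} \leq M\lambda^{\|w\|}$, $|F_1^w - F_2^w|_\infty \leq M\lambda^{\|w\|-1}|f^1-f^2|_{Lip(\gamma-1)}$, and $\max_i |f^i(\nu_j)|_{\infty} \lesssim \lambda^{|\nu_j|}$ combined with $\|x^{i,j}\|_{1-var} \leq Cl^{|\nu_j|}$, so that each word $w$ contributes a factor $\prod_j (\lambda l)^{\#\{\text{letters} = j\}}$-type weight $(\lambda l)^{\|w\|}$; here $\lambda l \leq 1$ keeps all these powers under control. What is genuinely new is the second-order expansion in the \emph{initial conditions}: for the principal words one must Taylor-expand $F_i^N(y^i_u) - F_i^N(\widetilde y^i_u)$ around the base point, isolating the $Lip(\gamma - N + 1)$ top term $(d^{N-1}f^i)(f^i)^{N-1}$ (the rest being $Lip(2)$), and for this term split off the linear-in-$(y^i_0 - \widetilde y^i_0)$ part, whose coefficient itself differs between $i=1$ and $i=2$ by the $|f^1-f^2|_{Lip(\gamma-1)}$ term and by the cross terms involving $\widetilde y^1_0 - \widetilde y^2_0$. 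This is where the three structurally different terms in the conclusion arise: the $M\lambda l\,|(y^1_0-\widetilde y^1_0)-(y^2_0-\widetilde y^2_0)|$ term from the linear-and-aligned piece, the $M\lambda l(|y^1_0-\widetilde y^1_0|+|y^2_0-\widetilde y^2_0|)(\cdots)$ term from the mismatch in the linear coefficients, and the $M(\lambda l)^{\lfloor\gamma\rfloor}(|y^1_0-\widetilde y^1_0|+|y^2_0-\widetilde y^2_0|)^{\{\gamma\}}(\cdots)$ term from the H\"older remainder of the $Lip(\gamma-N+1) = Lip(\{\gamma\}+1)$ top coefficient (noting $\lfloor\gamma\rfloor = N$ in this regime). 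Throughout, increments of products of the $F$'s are handled by Lemma \ref{Lemma increments of functions}, and the $C^0$ bounds $|y^i_{0,\cdot} - \widetilde y^i_{0,\cdot}|_{\infty,[0,1]} \lesssim \lambda l(\cdots)$ and $|y^i_{0,\cdot}|_{\infty,[0,1]} \lesssim \lambda l$ are imported from Proposition \ref{Proposition ODE estimates} and plugged back in, just as in Lemma \ref{Lemma Abar}.

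The main obstacle I anticipate is the bookkeeping in the second-order expansion: because both the vector fields $f^i$ and the Taylor base points $\widetilde y^i_0$ carry an index $i$, each ``difference of differences'' must be decomposed carefully — e.g., $F_1^N(y^1_u) - F_1^N(\widetilde y^1_u) - F_2^N(y^2_u) + F_2^N(\widetilde y^2_u)$ — into a telescoping sum that simultaneously isolates (a) the common linear term acting on $(y^1_0-\widetilde y^1_0)-(y^2_0-\widetilde y^2_0)$, (b) the discrepancy of the linear operators $dF_1^N(\widetilde y^1) - dF_2^N(\widetilde y^2)$, which by Lemma \ref{Lemma increments of functions} and the Lipschitz bounds on $f^1-f^2$ produces the factor $|\widetilde y^1_0 - \widetilde y^2_0| + \delta + \lambda^{-1}|f^1-f^2|_{Lip(\gamma-1)}$, and (c) the genuinely nonlinear H\"older remainder of exponent $\{\gamma\}$ in the increments $|y^i_0 - \widetilde y^i_0|$. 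Once that decomposition is set up, estimating each piece is routine given the bounds above; the final $M\lambda l\,\delta\,|y^2_0 - \widetilde y^2_0|$ term comes from the higher words $\|w\| > N$ where the $\|x^{1,j}-x^{2,j}\|_{1-var} \leq \delta C l^{|\nu_j|}$ hypothesis enters. I would conclude, as in Lemma \ref{Lemma Abar}, by a general-case reduction via concatenating time-reversals if needed, though here (ii) already fixes $x^i$ on both sides so that step may be unnecessary.
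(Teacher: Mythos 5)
Your integrand-level analysis is on target: the three-way decomposition of $F_1^w(y^1)-F_1^w(\widetilde y^1)-F_2^w(y^2)+F_2^w(\widetilde y^2)$ into an aligned linear part, a coefficient mismatch producing the factor $\left\vert\widetilde y^1_0-\widetilde y^2_0\right\vert+\delta+\lambda^{-1}\left\vert f^1-f^2\right\vert_{Lip(\gamma-1)}$, and a H\"older remainder of exponent $\{\gamma\}$ from the $Lip(\gamma-N+1)$ top term $(d^{N-1}f^i)(f^i)^{N-1}$, is exactly what the paper extracts via Lemma \ref{Lemma increments of functions}. But the framework you wrap it in does not work here. The word expansion of Lemma \ref{Lemma Abar} is useful there because $S_{[p]}(x^i)_{0,1}=S_{[p]}(\widetilde x^i)_{0,1}$ forces the Taylor-polynomial (``constant'') terms to cancel, leaving only remainders of order $(\lambda l)^{\gamma}$. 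In Lemma \ref{Lemma Bbar} the two solutions $y^i$ and $\widetilde y^i$ are driven by the \emph{same} path $x^i$ and differ only in the initial condition, so the constant terms $\left(F_i^w(y^i_0)-F_i^w(\widetilde y^i_0)\right)\left(S_N(x^i)_{0,1},w\right)$ do \emph{not} cancel; your claim that ``only the remainder survives'' is false in this setting, and these surviving first-order terms are precisely what produce the leading bound $M\lambda l\left\vert (y^1_0-\widetilde y^1_0)-(y^2_0-\widetilde y^2_0)\right\vert$ (note the conclusion is first order in $\lambda l$, not order $(\lambda l)^{\gamma}$). Your plan gives no account of how these surviving terms are to be handled, which is the actual content of the lemma.

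The paper instead proves Lemma \ref{Lemma Bbar} by a direct first-order argument (the analogue of Lemma 10.25 of Friz--Victoir, not of Lemma 10.23): write the integral equation for $(y^1-\widetilde y^1)-(y^2-\widetilde y^2)$, apply the increment decomposition to the integrand to obtain an integral inequality whose dominant term is $\sum_{j}M\lambda^{|\nu_j|}\int_0^t\left\vert (y^1_{0,r}-\widetilde y^1_{0,r})-(y^2_{0,r}-\widetilde y^2_{0,r})\right\vert\left\vert dx^{1,j}_r\right\vert$, substitute the sup-norm bounds of Proposition \ref{Proposition ODE estimates} into the remaining terms, and close with Gronwall's lemma using $\lambda l\le 1$. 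That Gronwall closure (or a substitute for it) is missing from your proposal: the self-referential integral term does not disappear under a finite word expansion once the cancellation is unavailable. A smaller inaccuracy: the term $M\lambda l\,\delta\left\vert y^2_0-\widetilde y^2_0\right\vert$ arises already at first order, from the piece $\int\left(V^2(y^2)-V^2(\widetilde y^2)\right)d(x^1-x^2)$, not from words of degree greater than $N$. You are right, though, that no time-reversal/concatenation reduction is needed here.
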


\begin{proof}
Assume $\gamma \in (p,\left[ p\right] +1]$ and denote $N:=\left[ p\right]
=\lfloor \gamma \rfloor $. The constant $M$ in the following proof may
depend on $C,\gamma ,p,d$ and its exact value may change.

Separate the $Lip\left( \gamma -N+1\right) $ term $\left(
d^{N-1}f^{i}\right) \left( f^{i}\right) ^{N-1}$ from $\left\{ f^{i}\left(
\nu _{j}\right) \right\} _{j=1}^{K}$ (if it is one of $f^{i}\left( \nu
_{j}\right) $, $j=1,\dots ,K$, otherwise do nothing). Since $\lambda l\leq 1$%
, $\sum_{j=1}^{K}\left( \lambda l\right) ^{\left\vert \nu _{j}\right\vert
}\leq M\lambda l$. The term associated with $\left( d^{N-1}f^{i}\right)
\left( f^{i}\right) ^{N-1}$ contributes a factor that is comparable to $%
\left( \lambda l\right) ^{N}$. Hence, based on Lemma \ref{Lemma increments
of functions}, by adapting Lemma 10.22 \cite{friz2010multidimensional}, 
\begin{eqnarray*}
&&\left\vert \left( y_{0,t}^{1}-\widetilde{y}_{0,t}^{1}\right) -\left(
y_{0,t}^{2}-\widetilde{y}_{0,t}^{2}\right) \right\vert \\
&\leq &\sum_{j=1}^{K}M\lambda ^{\left\vert \nu _{j}\right\vert
}\int_{0}^{t}\left\vert \left( y_{0,r}^{1}-\widetilde{y}_{0,r}^{1}\right)
-\left( y_{0,r}^{2}-\widetilde{y}_{0,r}^{2}\right) \right\vert \left\vert
dx_{r}^{1,j}\right\vert \\
&&+M\lambda l\left\vert \left( y_{0}^{1}-\widetilde{y}_{0}^{1}\right)
-\left( y_{0}^{2}-\widetilde{y}_{0}^{2}\right) \right\vert \\
&&+M\lambda l\left( \sum_{i=1,2}\left\vert y^{i}-\widetilde{y}%
^{i}\right\vert _{\infty ,\left[ 0,t\right] }\right) \left( \left\vert 
\widetilde{y}^{1}-\widetilde{y}^{2}\right\vert _{\infty ,\left[ 0,t\right]
}+\lambda ^{-1}\left\vert f^{1}-f^{2}\right\vert _{Lip\left( \gamma
-1\right) }\right) \\
&&+M\left( \lambda l\right) ^{N}\left( \sum_{i=1,2}\left\vert y^{i}-%
\widetilde{y}^{i}\right\vert _{\infty ,\left[ 0,t\right] }\right) ^{\left\{
\gamma \right\} }\left( \left\vert \widetilde{y}^{1}-\widetilde{y}%
^{2}\right\vert _{\infty ,\left[ 0,t\right] }+\lambda ^{-1}\left\vert
f^{1}-f^{2}\right\vert _{Lip\left( \gamma -1\right) }\right) \\
&&+M\lambda l\delta \left\vert y^{2}-\widetilde{y}^{2}\right\vert _{\infty ,%
\left[ 0,t\right] }.
\end{eqnarray*}%
Since $\lambda l\leq 1$, based on $\left( \ref{ODE estimate 2}\right) $, we
have%
\begin{equation*}
\left\vert y^{i}-\widetilde{y}^{i}\right\vert _{\infty ,\left[ 0,t\right]
}\leq M\left\vert y_{0}^{i}-\widetilde{y}_{0}^{i}\right\vert \text{, }i=1,2
\end{equation*}%
and%
\begin{equation*}
\left\vert \widetilde{y}^{1}-\widetilde{y}^{2}\right\vert _{\infty ,\left[
0,t\right] }\leq M\left( \left\vert \widetilde{y}_{0}^{1}-\widetilde{y}%
_{0}^{2}\right\vert +\delta +\lambda ^{-1}\left\vert f^{1}-f^{2}\right\vert
_{Lip\left( \gamma -1\right) }\right) .
\end{equation*}%
Based on Gronwall's Lemma and that $\lambda l\leq 1$, the proof is finished.
\end{proof}

Define the symmetry factor $\sigma :\mathcal{F}_{\mathcal{L}}\rightarrow 
\mathbb{N}
$ inductively as $\sigma \left( \bullet _{a}\right) :=1$ and 
\begin{equation*}
\sigma \left( \tau _{1}^{n_{1}}\cdots \tau _{k}^{n_{k}}\right) =\sigma
\left( \left[ \tau _{1}^{n_{1}}\cdots \tau _{k}^{n_{k}}\right] _{a}\right)
:=n_{1}!\cdots n_{k}!\sigma \left( \tau _{1}\right) ^{n_{1}}\cdots \sigma
\left( \tau _{k}\right) ^{n_{k}}
\end{equation*}%
where $\tau _{i}\in \mathcal{T}_{\mathcal{L}}$, $i=1,\dots ,k$ are different
labeled trees (labels counted). Based on Proposition 2.3 \cite%
{yang2022remainder}, for a branched rough path $X\in C^{p-var}\left( \left[
0,T\right] ,G_{\mathcal{L}}^{\left[ p\right] }\right) $, if define $\bar{X}:%
\left[ 0,T\right] \rightarrow \left( \mathcal{F}_{\mathcal{L}}^{\left[ p%
\right] }\rightarrow 
\mathbb{R}
\right) $ as, for $t\in \left[ 0,T\right] $ and $\tau \in \mathcal{F}_{%
\mathcal{L}}^{\left[ p\right] }$, 
\begin{equation}
\left( \bar{X}_{t},\tau \right) :=\frac{\left( X_{t},\tau \right) }{\sigma
\left( \tau \right) },  \label{equation relation between X and Xbar}
\end{equation}%
then $\bar{X}$ takes values in the step-$\left[ p\right] $ truncated group
of group-like elements in Grossman Larson Hopf algebra (the truncated group
is denoted as $\mathcal{G}_{\mathcal{L}}^{\left[ p\right] }$). Moreover,
based on Proposition 2.3 \cite{yang2022remainder}, for every $0\leq s\leq
t\leq T$ and $\tau \in \mathcal{F}_{\mathcal{L}}^{\left[ p\right] }$,%
\begin{equation}
\left( \bar{X}_{s,t},\tau \right) =\frac{\left( X_{s,t},\tau \right) }{%
\sigma \left( \tau \right) }.  \label{equation relation between X and Xbar 2}
\end{equation}%
We equip $a\in \mathcal{G}_{\mathcal{L}}^{\left[ p\right] }$ with the norm%
\begin{equation*}
\left\Vert a\right\Vert :=\max_{\tau \in \mathcal{F}_{\mathcal{L}}^{\left[ p%
\right] }}\left\vert \left( a,\tau \right) \right\vert ^{\frac{1}{\left\vert
\tau \right\vert }}.
\end{equation*}

\begin{proof}[Proof of Theorem \protect\ref{Theorem main theorem}]
For $i=1,2$, replace $f^{i}$ by $\lambda ^{-1}f^{i}$ and replace $\left(
X_{t}^{i},\tau \right) $ by $\lambda ^{\left\vert \tau \right\vert }\left(
X_{t}^{i},\tau \right) $, $\tau \in \mathcal{F}_{\mathcal{L}}^{\left[ p%
\right] }$. Then the solution to differential equations stays unchanged and $%
\left\vert f^{i}\right\vert _{Lip\left( \gamma \right) }\leq 1$, $i=1,2$.
Suppose $\gamma \in (p,\left[ p\right] +1]$. Denote 
\begin{equation*}
N:=\left[ p\right] \text{ and }\delta :=\rho _{p-\omega ;\left[ 0,T\right]
}\left( X^{1},X^{2}\right) .
\end{equation*}%
The constant $M$ in the following proof may depend on $\gamma ,p,d$ and its
exact value may change.

Firstly suppose $\omega \left( 0,T\right) \leq 1$. For $0\leq s\leq t\leq T$%
, based on $\left( \ref{equation relation between X and Xbar 2}\right) $ and
that $\sigma \left( \tau \right) \geq 1$, we have $\left\Vert \bar{X}%
_{s,t}^{i}\right\Vert \leq \omega \left( s,t\right) ^{\frac{1}{p}}$, $i=1,2$%
, and for $\tau \in \mathcal{F}_{\mathcal{L}}^{\left[ p\right] }$,%
\begin{equation*}
\left\vert \left( \bar{X}_{s,t}^{1}-\bar{X}_{s,t}^{2},\tau \right)
\right\vert \leq \left\vert \left( X_{s,t}^{1}-X_{s,t}^{2},\tau \right)
\right\vert \leq \delta \omega \left( s,t\right) ^{\frac{\left\vert \tau
\right\vert }{p}}\text{.}
\end{equation*}%
Recall $\Phi $ in Notation \ref{Notation isomorphism between Pi rough paths
and branched rough paths} which denotes the isomorphism from a class of $\Pi 
$-rough paths to branched rough paths. Fix $\left[ s,t\right] \subseteq %
\left[ 0,T\right] $. For $\tau \in \mathcal{F}_{\mathcal{L}}^{\left[ p\right]
}$, rescale $\left( \bar{X}_{s,t}^{i},\tau \right) $ by $\omega \left(
s,t\right) ^{-\left\vert \tau \right\vert /p}$ and apply Lemma \ref{Lemma
existence of continuous bounded variation paths for Grossman Larson group}.
Then there exist $x^{i,s,t}=\left( x^{i,s,t,1},\cdots ,x^{i,s,t,K}\right)
\in C^{1-var}\left( \left[ s,t\right] ,%
\mathbb{R}
^{K}\right) $, $i=1,2$ such that $\Phi \left( S_{\left[ p\right] }\left(
x^{i,s,t}\right) \right) =\bar{X}_{s,t}^{i}$, $i=1,2$\ and for $j=1,\dots ,K$%
,%
\begin{eqnarray}
\max_{i=1,2}\left\Vert x^{i,s,t,j}\right\Vert _{1-var,\left[ s,t\right] }
&\leq &M\omega \left( s,t\right) ^{\frac{\left\vert \nu _{j}\right\vert }{p}}%
\text{, }i=1,2  \label{inner estimate 2} \\
\left\Vert x^{1,s,t,j}-x^{2,s,t,j}\right\Vert _{1-var,\left[ s,t\right] }
&\leq &\delta M\omega \left( s,t\right) ^{\frac{\left\vert \nu
_{j}\right\vert }{p}}.  \label{inner estimate 3}
\end{eqnarray}%
Let $y^{i,s,t}:\left[ s,t\right] \rightarrow 
\mathbb{R}
^{e}$ denote the unique solution of the ODE%
\begin{equation*}
dy_{r}^{i,s,t}=\sum_{j=1}^{K}f^{i}\left( \nu _{j}\right) \left(
y_{r}^{i,s,t}\right) dx_{r}^{i,s,t,j},\text{ }y_{s}^{i,s,t}=y_{s}^{i}.
\end{equation*}%
Denote%
\begin{eqnarray*}
\Gamma _{s,t}^{i} &:&=y_{s,t}^{i}-y_{s,t}^{i,s,t}\text{, }i=1,2 \\
\bar{\Gamma}_{s,t} &:&=\Gamma _{s,t}^{1}-\Gamma _{s,t}^{2}.
\end{eqnarray*}%
Since we assumed $\omega \left( 0,T\right) \leq 1$, by setting $\omega
\left( 0,T\right) =1$ in Proposition 3.17 in \cite{yang2022remainder}, we
have%
\begin{eqnarray}
\left\vert \Gamma _{s,t}^{i}\right\vert &\leq &M\omega \left( s,t\right) ^{%
\frac{\left[ p\right] +1}{p}}\text{, }i=1,2  \label{inner estimate 12} \\
\left\vert \bar{\Gamma}_{s,t}\right\vert &\leq &M\omega \left( s,t\right) ^{%
\frac{\left[ p\right] +1}{p}}.  \notag
\end{eqnarray}%
In fact, based on the construction, $x^{i,s,t}\in C^{1-var}\left( \left[ s,t%
\right] ,%
\mathbb{R}
^{K}\right) $ here may not be a geodesic associated with $\bar{X}_{s,t}^{i}$
in the sense of Definition 3.2 \cite{yang2022remainder}. The estimate of
Proposition 3.17 \cite{yang2022remainder} applies, because $\Phi \left( S_{%
\left[ p\right] }\left( x^{i,s,t}\right) _{s,t}\right) =\bar{X}_{s,t}^{i}$
and for $j=1,\dots ,K$, $\left\Vert x^{i,s,t,j}\right\Vert _{1-var,\left[ s,t%
\right] }\leq M\omega \left( s,t\right) ^{\left\vert \nu _{j}\right\vert /p}$
based on Lemma \ref{Lemma existence of continuous bounded variation paths
for Grossman Larson group}.

For $i=1,2$ and $0\leq s\leq t\leq u\leq T$, let $x^{i,s,t,u}\in
C^{1-var}\left( \left[ s,u\right] ,%
\mathbb{R}
^{K}\right) $ denote the concatenation of $x^{i,s,t}$ with $x^{i,t,u}$.
Denote by $y^{i,s,t,u}:\left[ s,u\right] \rightarrow 
\mathbb{R}
^{e}$ the solution of the ODE%
\begin{equation*}
dy_{r}^{i,s,t,u}=\sum_{j=1}^{K}f^{i}\left( \nu _{j}\right) \left(
y_{r}^{i,s,t,u}\right) dx_{r}^{i,s,t,u,j},\text{ }y_{s}^{i,s,t,u}=y_{s}^{i}.
\end{equation*}%
For $i=1,2$, denote%
\begin{equation*}
A^{i}:=y_{s,u}^{i,s,t,u}-y_{s,u}^{i,s,u}\text{, }%
B^{i}:=y_{t}^{i,s,t}+y_{t,u}^{i,t,u}-y_{u}^{i,s,t,u}
\end{equation*}%
and denote%
\begin{equation*}
\bar{A}:=A^{1}-A^{2}\text{, }\bar{B}=B^{1}-B^{2}
\end{equation*}%
so that%
\begin{equation*}
\bar{\Gamma}_{s,u}-\bar{\Gamma}_{s,t}-\bar{\Gamma}_{t,u}=\bar{A}+\bar{B}.
\end{equation*}

Denote 
\begin{equation*}
\bar{\delta}:=\delta +\lambda ^{-1}\left\vert f^{1}-f^{2}\right\vert
_{Lip\left( \gamma -1\right) }.
\end{equation*}%
As $S_{\left[ p\right] }\left( x^{i,s,t,u}\right) _{s,u}=S_{\left[ p\right]
}\left( x^{i,s,u}\right) _{s,u}$, $i=1,2$, based on $\left( \ref{inner
estimate 2}\right) $ and $\left( \ref{inner estimate 3}\right) $, apply
Lemma \ref{Lemma Abar},%
\begin{equation}
\left\vert \bar{A}\right\vert \leq M\omega \left( s,u\right) ^{\frac{\gamma 
}{p}}\left( \left\vert y_{s}^{1}-y_{s}^{2}\right\vert +\bar{\delta}\right) .
\label{inner estimate 4}
\end{equation}%
Denote vector fields $V^{i}:=\left( f^{i}\left( \nu _{1}\right) ,\cdots
,f^{i}\left( \nu _{K}\right) \right) $, $i=1,2$. Based on Lemma \ref{Lemma
Bbar},%
\begin{eqnarray}
\left\vert \bar{B}\right\vert &=&\left\vert \left( \pi _{V^{1}}\left(
t,y_{t}^{1};x^{1,t,u}\right) _{t,u}-\pi _{V^{1}}\left( t,y_{t}^{1}-\Gamma
_{s,t}^{1};x^{1,t,u}\right) _{t,u}\right) \right.  \label{inner estimate 5}
\\
&&\left. -\left( \pi _{V^{2}}\left( t,y_{t}^{2};x^{2,t,u}\right) _{t,u}-\pi
_{V^{2}}\left( t,y_{t}^{2}-\Gamma _{s,t}^{2};x^{2,t,u}\right) _{t,u}\right)
\right\vert  \notag \\
&\leq &M\omega \left( s,u\right) ^{1/p}\left\vert \bar{\Gamma}%
_{s,t}\right\vert  \notag \\
&&+M\left( \omega \left( s,u\right) ^{1/p}\left( \left\vert \Gamma
_{s,t}^{1}\right\vert +\left\vert \Gamma _{s,t}^{2}\right\vert \right)
+\omega \left( s,u\right) ^{N/p}\left( \left\vert \Gamma
_{s,t}^{1}\right\vert +\left\vert \Gamma _{s,t}^{2}\right\vert \right)
^{\left\{ \gamma \right\} }\right)  \notag \\
&&\times \left( \left\vert y_{t}^{1}-y_{t}^{2}\right\vert +\bar{\delta}%
\right)  \notag \\
&&+M\omega \left( s,u\right) ^{1/p}\delta \left\vert \Gamma
_{s,t}^{2}\right\vert  \notag
\end{eqnarray}%
Based on $\left( \ref{inner estimate 12}\right) $, $\left\vert \Gamma
_{s,t}^{i}\right\vert \leq M\omega \left( s,t\right) ^{\frac{\left[ p\right]
+1}{p}}$, $i=1,2$. As $\omega \left( 0,T\right) \leq 1$, combine $\left( \ref%
{inner estimate 4}\right) $ and $\left( \ref{inner estimate 5}\right) $,%
\begin{eqnarray}
\left\vert \bar{\Gamma}_{s,u}\right\vert &\leq &\left\vert \bar{A}%
\right\vert +\left\vert \bar{B}\right\vert +\left\vert \bar{\Gamma}%
_{s,t}\right\vert +\left\vert \overline{\Gamma }_{t,u}\right\vert
\label{inner estimate 6} \\
&\leq &\exp \left( M\omega \left( s,u\right) ^{1/p}\right) \left( \left\vert 
\bar{\Gamma}_{s,t}\right\vert +\left\vert \overline{\Gamma }%
_{t,u}\right\vert \right)  \notag \\
&&+M\omega \left( s,u\right) ^{\gamma /p}\left( \sup_{r\in \left[ s,u\right]
}\left\vert y_{r}^{1}-y_{r}^{2}\right\vert +\bar{\delta}\right) .  \notag
\end{eqnarray}%
Since $\left\vert f^{1}\left( \nu _{j}\right) -f^{2}\left( \nu _{j}\right)
\right\vert _{\infty }\leq M\lambda ^{\left\vert \nu _{j}\right\vert
-1}\left\vert f^{1}-f^{2}\right\vert _{Lip\left( \gamma -1\right) }$ for $%
j=1,\dots ,K$ and $\omega \left( 0,T\right) \leq 1$, based on $\left( \ref%
{ODE estimate 1}\right) $,%
\begin{equation}
\left\vert y_{s,t}^{1,s,t}-y_{s,t}^{2,s,t}\right\vert \leq M\left(
\left\vert y_{s}^{1}-y_{s}^{2}\right\vert +\bar{\delta}\right) \omega \left(
s,t\right) ^{1/p}.  \label{inner estimate 7}
\end{equation}%
Combine $\left( \ref{inner estimate 6}\right) $, $\left( \ref{inner estimate
7}\right) $ and that $\left\vert \bar{\Gamma}_{s,t}\right\vert \leq M\omega
\left( s,t\right) ^{\frac{\left[ p\right] +1}{p}}$, based on Proposition
10.63 \cite{friz2010multidimensional} (applying to the interval $\left[ s,t%
\right] $), we have%
\begin{equation*}
\left\vert \bar{\Gamma}_{s,t}\right\vert \leq M\left( \left\vert
y_{s}^{1}-y_{s}^{2}\right\vert +\bar{\delta}\right) \omega \left( s,t\right)
^{\gamma /p}\exp \left( M\omega \left( s,t\right) \right) .
\end{equation*}%
Hence, when $\omega \left( s,t\right) \leq 1$, 
\begin{eqnarray}
\left\vert y_{s,t}^{1}-y_{s,t}^{2}\right\vert &\leq &\left\vert
y_{s,t}^{1,s,t}-y_{s,t}^{2,s,t}\right\vert +\left\vert \bar{\Gamma}%
_{s,t}\right\vert  \label{inner estimate 8} \\
&\leq &M\left( \left\vert y_{s}^{1}-y_{s}^{2}\right\vert +\bar{\delta}%
\right) \omega \left( s,t\right) ^{1/p}\exp \left( M\omega \left( s,t\right)
\right) .  \notag
\end{eqnarray}

Suppose $\omega \left( 0,T\right) >1$. When $\omega \left( s,t\right) \leq 1$%
, the estimates above apply. When $\omega \left( s,t\right) >1$, divide $%
\left[ s,t\right] =\cup _{i=0}^{n-1}\left[ t_{i},t_{i+1}\right] $ such that $%
\omega \left( t_{i},t_{i+1}\right) =1$, $i=0,\dots ,n-2$ and $\omega \left(
t_{n-1},t_{n}\right) \leq 1$. By the super-additivity of $\omega $ (i.e. $%
\omega \left( s,t\right) +\omega \left( t,u\right) \leq \omega \left(
s,u\right) $ for $s\leq t\leq u$),%
\begin{equation}
n=\sum_{i=0}^{n-2}\omega \left( t_{i},t_{i+1}\right) +1\leq \omega \left(
s,t\right) +1\leq 2\omega \left( s,t\right) \text{.}
\label{inner estimate 9}
\end{equation}%
Since $\omega \left( t_{i},t_{i+1}\right) \leq 1$, $i=0,\dots ,n-1$, based
on $\left( \ref{inner estimate 8}\right) $, there exists $M_{0}>0$ such that%
\begin{equation*}
\left\vert y_{t_{i},t_{i+1}}^{1}-y_{t_{i},t_{i+1}}^{2}\right\vert \leq
M_{0}\left( \left\vert y_{t_{i}}^{1}-y_{t_{i}}^{2}\right\vert +\bar{\delta}%
\right)
\end{equation*}%
and%
\begin{eqnarray*}
\left\vert y_{t_{i}}^{1}-y_{t_{i}}^{2}\right\vert &\leq &\left\vert
y_{t_{i-1}}^{1}-y_{t_{i-1}}^{2}\right\vert +\left\vert
y_{t_{i-1},t_{i}}^{1}-y_{t_{i-1},t_{i}}^{2}\right\vert \\
&\leq &\left( 1+M_{0}\right) \left\vert
y_{t_{i-1}}^{1}-y_{t_{i-1}}^{2}\right\vert +M_{0}\bar{\delta} \\
&\leq &\left( 1+M_{0}\right) ^{i}\left\vert y_{s}^{1}-y_{s}^{2}\right\vert
+M_{0}\left( \sum_{j=0}^{i-1}\left( 1+M_{0}\right) ^{j}\right) \bar{\delta}.
\end{eqnarray*}%
Hence%
\begin{equation*}
\left\vert y_{t_{i},t_{i+1}}^{1}-y_{t_{i},t_{i+1}}^{2}\right\vert \leq
M_{0}\left( 1+M_{0}\right) ^{i}\left( \left\vert
y_{s}^{1}-y_{s}^{2}\right\vert +\bar{\delta}\right)
\end{equation*}%
and%
\begin{eqnarray}
&&\left\vert y_{s,t}^{1}-y_{s,t}^{2}\right\vert  \label{inner estimate 10} \\
&\leq &\sum_{i=0}^{n-1}\left\vert
y_{t_{i},t_{i+1}}^{1}-y_{t_{i},t_{i+1}}^{2}\right\vert  \notag \\
&\leq &\sum_{i=0}^{n-1}M_{0}\left( 1+M_{0}\right) ^{i}\left( \left\vert
y_{s}^{1}-y_{s}^{2}\right\vert +\bar{\delta}\right)  \notag \\
&\leq &\left( 1+M_{0}\right) ^{n}\left( \left\vert
y_{s}^{1}-y_{s}^{2}\right\vert +\bar{\delta}\right)  \notag \\
&=&\exp \left( n\ln \left( 1+M_{0}\right) \right) \left( \left\vert
y_{s}^{1}-y_{s}^{2}\right\vert +\bar{\delta}\right)  \notag \\
&\leq &\left( \left\vert y_{s}^{1}-y_{s}^{2}\right\vert +\bar{\delta}\right)
\exp \left( M\omega \left( s,t\right) \right)  \notag
\end{eqnarray}%
where in the last step we used $\left( \ref{inner estimate 9}\right) $. In
particular, when $\left[ s,t\right] =\left[ 0,s\right] $,%
\begin{eqnarray}
\left\vert y_{s}^{1}-y_{s}^{2}\right\vert &\leq &\left\vert
y_{0}^{1}-y_{0}^{2}\right\vert +\left\vert y_{0,s}^{1}-y_{0,s}^{2}\right\vert
\label{inner estimate 11} \\
&\leq &2\left( \left\vert y_{0}^{1}-y_{0}^{2}\right\vert +\bar{\delta}%
\right) \exp \left( M\omega \left( 0,s\right) \right) .  \notag
\end{eqnarray}%
Combining $\left( \ref{inner estimate 10}\right) $, $\left( \ref{inner
estimate 11}\right) $ and that $\omega \left( s,t\right) \geq 1$, we have 
\begin{equation*}
\left\vert y_{s,t}^{1}-y_{s,t}^{2}\right\vert \leq M\left( \left\vert
y_{0}^{1}-y_{0}^{2}\right\vert +\bar{\delta}\right) \omega \left( s,t\right)
^{1/p}\exp \left( M\omega \left( 0,t\right) \right) .
\end{equation*}%
Combining $\left( \ref{inner estimate 8}\right) $, $\left( \ref{inner
estimate 11}\right) $ and the super-additivity of $\omega $, the same result
holds when $\omega \left( s,t\right) \leq 1$. Then the proposed estimate
holds as $\bar{\delta}:=\rho _{p-\omega ,\left[ 0,T\right] }\left(
X^{1},X^{2}\right) +\lambda ^{-1}\left\vert f^{1}-f^{2}\right\vert
_{Lip\left( \gamma -1\right) }$.
\end{proof}

\bibliographystyle{unsrt}
\bibliography{acompat,roughpath}

\end{document}